\newcommand{\dR}{\mathbb{R}}
\newcommand{\dC}{\mathbb{C}}
\newcommand{\dN}{\mathbb{N}}
\newcommand{\dP}{\mathbb{P}}
\newcommand{\cB}{\mathcal{B}}
\newcommand{\cM}{\mathcal{M}}
\newcommand{\cF}{\mathcal{F}}
\newcommand{\cP}{\mathcal{P}}
\newcommand{\cL}{\mathcal{L}}
\newcommand{\cG}{\mathcal{G}}
\newcommand{\veps}{\varepsilon}
\newcommand{\rI}{\mathrm{I}}
\newcommand{\E}{\mathds{E}}
\newcommand{\cN}{\mathcal{N}}
\newcommand{\dL}{\mathbb{L}}
\newtheorem{thm}{Theorem}[section]
\newtheorem{lem}[thm]{Lemma}
\newtheorem{cor}[thm]{Corollary}
\newtheorem{remark}[thm]{Remark}
\providecommand{\keywords}[1]
{
  \small	
  \textbf{\textit{Keywords:}} #1
}
\begin{document}

\title{A Gaussian process limit for the self-normalized Ewens-Pitman process}


\author[1]{Bernard Bercu\thanks{bernard.bercu@math.u-bordeaux.fr}}
\author[2]{Stefano Favaro\thanks{stefano.favaro@unito.it}}
\affil[1]{\small{Institut de Math\'ematiques de Bordeaux, Universit\'e de Bordeaux, France}}
\affil[2]{\small{Department of Economics and Statistics, University of Torino and Collegio Carlo Alberto, Italy}}

\maketitle

\begin{abstract}
For an integer $n\geq1$, consider a random partition $\Pi_{n}$ of $\{1,\ldots,n\}$ into $K_{n}$ partition sets with $K_{r,n}$ partition subsets of size $r=1,\ldots,n$, and assume $\Pi_{n}$ distributed according to the Ewens-Pitman model with parameters $\alpha\in]0,1[$ and $\theta>-\alpha$. Although the large-$n$ asymptotic behaviors of $K_{n}$ and $K_{r,n}$ are well understood in terms of almost sure convergence and Gaussian fluctuations, much less is known about the asymptotic behavior of $P_{r,n}=K_{r,n}/K_n$ and of the self-normalized Ewens-Pitman process $(P_{1,n},P_{2,n},\dots)$. Motivated by the almost sure convergence of $(P_{1,n},P_{2,n},\dots)$ to the Sibuya distribution $p_{\alpha}=(p_{\alpha}(1),p_{\alpha}(2),\ldots)$, where $p_{\alpha}(r)$ is the probability mass at $r=1,2,\ldots$, we establish the $\ell^{2}$ distributional convergence
\begin{displaymath}
\sqrt{K_{n}}((P_{1,n},\,P_{2,n},\ldots)-p_{\alpha})\underset{n\rightarrow+\infty}{\overset{\cL}{\longrightarrow}}\mathcal{G}(\Gamma_\alpha),
\end{displaymath}
where $\mathcal{G}(\Gamma_\alpha)$ stands for a centered Gaussian process with covariance matrix $\Gamma_\alpha=diag(p_{\alpha}) - p_{\alpha} p_{\alpha}^T$. We apply our result to the estimation of the parameter
$\alpha$.

\end{abstract}

\keywords{Ewens-Pitman model; exchangeable random partition; Gaussian process; martingale; Sibuya distribution}


\section{Introduction}

The Ewens-Pitman model was introduced in \citet{Pit(95)} as a generalization of the celebrated Ewens model in population genetics \citep{Ewe(72)}. For an integer $n\geq 1$, consider a random partition of $[n]=\{1,\ldots,n\}$ into $K_{n}\in\{1,\ldots,n\}$ partition sets, and, for $r=1,\ldots,n$, denote by $K_{r,n}\in\{0,1,\ldots,n\}$ the number of partitions subsets of size $r$, i.e.,
$$
n=\sum_{r=1}^{n}rK_{r,n}\qquad \text{and}\qquad K_{n}=\sum_{r=1}^{n}K_{r,n}.
$$ 
For $\alpha\in[0,1[$ and $\theta>-\alpha$, the Ewens-Pitman model assigns to $\mathbf{K}_{n}=(K_{1,n},\ldots,K_{n,n})$ the probability
\begin{equation}\label{epsm}
\mathbb{P}(\mathbf{K}_{n}=(k_{1},\ldots,k_{n}))=n!\frac{\left(\frac{\theta}{\alpha}\right)^{(s_n)}}{(\theta)^{(n)}}\prod_{i=1}^{n}\left(\frac{\alpha(1-\alpha)^{(i-1)}}{i!}\right)^{k_{i}}\frac{1}{k_{i}!},
\end{equation}
where $s_n=k_1+\cdots+k_n$, and, for any $a\in \dR$, $(a)^{(n)}$ stands for the rising factorial of $a$ of order $n$, that is $(a)^{(n)}=a(a+1)\cdots(a+n-1)$. The Ewens model corresponds to \eqref{epsm} with $\alpha=0$. The probability distribution \eqref{epsm} admits a sequential or generative construction in terms of Chinese restaurant process \citep{Pit(95),Fen(98)}, a Poisson process construction by random sampling from the Pitman-Yor random measure \citep{Per(92),Pit(97)}, and a construction through compound Poisson (negative-Binomial) random partitions \cite{Dol(20),Dol(21)}. We refer to \citet[Chapter 3 and Chapter 4]{Pit(06)} for an overview of the Ewens-Pitman model, including applications in population genetics, excursion theory, Bayesian nonparametric statistics, combinatorics and statistical physics.

For $\alpha=0$ and $\theta>0$ there have been several works investigating the large-$n$ asymptotic behaviour of $K_{n}$ and the $K_{r,n}$. In particular, by exploiting the fact that $K_{n}$ is a sum of $n$ independent Bernoulli random variables, \citet[Theorem 2.3]{Kor(73)} showed that
\begin{equation}\label{eq:k0}
\lim_{n\rightarrow+\infty}\frac{K_{n}}{\log n}=\theta\qquad\text{a.s.}
\end{equation}
A central limit theorem for $K_{n}$ follows directly from Lindeberg-L\'evy theorem, along with a Berry-Esseen inequality. As regard the $K_{r,n}$'s, from \citet[Theorem 1]{Arr(92)} it follows that for all $r\geq1$
\begin{equation}\label{eq:kr0}
K_{r,n}\underset{n\rightarrow+\infty}{\overset{\cL}{\longrightarrow}}Z_{r}
\end{equation}
and
\begin{equation}\label{eq:kr0_all}
(K_{1,n},\,K_{2,n},\ldots)\underset{n\rightarrow+\infty}{\overset{\cL}{\longrightarrow}}(Z_{1},\,Z_{2},\ldots),
\end{equation}
where the $Z_{r}$'s are independent random variables with Poisson $\cP(\theta/r)$ distribution. \citet{Bar(92)} established a (quantitative) refinement  of \eqref{eq:kr0_all}  by providing tight lower and upper bounds for the Wasserstein distance between the laws of the Ewens process $(K_{1,n},\,K_{2,n},\ldots)$ and the Poisson process $(Z_{r})$. We refer \citet[Chapter 4]{Arr(03)} for a comprehensive overview.

For $\alpha\in]0,1[$ and $\alpha+ \theta>0$, results analogous to \eqref{eq:k0}-\eqref{eq:kr0_all} are available in the literature. In particular, by exploiting a martingale construction for $K_{n}$, \citet[Theorem 3.8]{Pit(06)} showed that
\begin{equation}\label{eq:kalpha}
\lim_{n\rightarrow+\infty}\frac{K_{n}}{n^{\alpha}}=S_{\alpha,\theta}\qquad\text{a.s.},
\end{equation}
where $S_{\alpha,\theta}$ is a positive and almost surely finite random variable, typically referred to as $\alpha$-diversity \citep{Pit(03)}. As regard the $K_{r,n}$'s, from \citet[Lemma 1.1]{Pit(06)} it follows that for all $r\geq1$
\begin{equation}\label{eq:kralpha}
\lim_{n\rightarrow+\infty}\frac{K_{r,n}}{n^{\alpha}}=p_{\alpha}(r) S_{\alpha,\theta}\qquad\text{a.s.},
\end{equation}
where
\begin{equation}
\label{Sibuya}
p_{\alpha}(r)=\frac{\alpha(1-\alpha)^{(r-1)}}{r!}.
\end{equation}
We refer to \citet{Ber(24)} for an alternative martingale approach to the limits \eqref{eq:kalpha} and \eqref{eq:kralpha}, also including convergence in $\mathbb{L}^{p}$, Gaussian fluctuations and laws of iterated logarithm.

\begin{remark}
Beyond almost sure and Gaussian fluctuations, $K_{n}$ and $K_{r,n}$ have been investigated with respect to large deviations and moderate deviations \citep{Fen(98),Fav(14),Fav(18),BF25}. Some non-asymptotic results for $K_{n}$ have been also established in terms of Berry-Esseen inequalities \citep{Dol(20),Dol(21)} and concentration inequalities \citep{Per(22),BF25}.
\end{remark}

\subsection{Main result}

For $\alpha\in]0,1[$ and $\alpha+ \theta>0$, the large-$n$ asymptotic behaviour of the Ewens-Pitman process $(K_{r,n},\,K_{2,n},\ldots)$ follows from the almost sure convergence \eqref{eq:kralpha}, that is
\begin{equation}\label{eq:kralpha_all}
\lim_{n\rightarrow+\infty}\left(\frac{K_{1,n}}{n^{\alpha}},\,\frac{K_{2,n}}{n^{\alpha}},\ldots\right)=(p_{\alpha}(1)S_{\alpha,\theta},\,p_{\alpha}(2)S_{\alpha,\theta},\ldots)\qquad\text{a.s.}
\end{equation}
which extends \eqref{eq:kr0_all}. The weights $p_\alpha=(p_{\alpha}(1),p_{\alpha}(2),\ldots)$ in \eqref{eq:kralpha_all} form a probability distribution on $\mathbb{N}$, which is known as Sibuya distribution \citep{Dev(93)}, namely $p_{\alpha}(r)\in]0,1[$ for all $r\geq1$ and 
\begin{equation}\label{sum_sibuya}
\sum_{r=1}^\infty p_{\alpha}(r)=\sum_{r=1}^\infty \frac{\alpha(1-\alpha)^{(r-1)}}{r!} =1.
\end{equation}
In contrast to the case $\alpha=0$, for $\alpha\in]0,1[$ there exists an interplay between the almost sure convergences \eqref{eq:kalpha} and \eqref{eq:kralpha_all}, which underpins the interpretation of $p_{\alpha}$ as the large-$n$ limiting proportions of partitions subsets \citep[Lemma 1.1]{Pit(06)}. It motivates us to study the asymptotic behavior of the self-normalized Ewens-Pitman process $(P_{1,n},\,P_{2,n},\ldots)$ defined, for all $r\geq1$, by
\begin{equation}
\label{DEFPrn}
P_{r,n}=\frac{K_{r,n}}{K_{n}}.
\end{equation}
An almost sure convergence of $(P_{1,n},\,P_{2,n},\ldots)$ follows from the almost sure convergences \eqref{eq:kalpha} and \eqref{eq:kralpha_all}, that is 
\begin{equation}\label{eq:kralpha_self}
\lim_{n\rightarrow+\infty}(P_{1,n},\,P_{2,n},\ldots)=p_{\alpha}\qquad\text{a.s.}
\end{equation}
The next theorem is the main result of the paper, proving a Gaussian process limit for $(P_{1,n},\,P_{2,n},\ldots)$.

\begin{thm}\label{main_teo}
Assume that $\alpha\in]0,1[$ and $\alpha+\theta>0$. Then, we have the $\ell^{2}$ distributional convergence
\begin{equation}\label{eq:gaussproc}
\sqrt{K_{n}}((P_{1,n},\,P_{2,n},\ldots)-p_{\alpha})\underset{n\rightarrow+\infty}{\overset{\cL}{\longrightarrow}}\mathcal{G}(\Gamma_\alpha),
\end{equation}
where $\mathcal{G}(\Gamma_\alpha)$ stands for a centered Gaussian process with infinite-dimensional covariance matrix 
\begin{equation}
\label{DEFGamma}
\Gamma_\alpha=diag(p_{\alpha}) - p_{\alpha} p_{\alpha}^T.
\end{equation} 
\end{thm}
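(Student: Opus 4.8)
The target covariance $\Gamma_\alpha = \mathrm{diag}(p_\alpha) - p_\alpha p_\alpha^T$ in \eqref{DEFGamma} is exactly the one-sample multinomial covariance attached to the probability vector $p_\alpha$, and this is the structural fact I would exploit. Writing $Q_{r,n} = K_{r,n} - p_\alpha(r)K_n$, so that $\sqrt{K_n}(P_{r,n} - p_\alpha(r)) = Q_{r,n}/\sqrt{K_n}$, the argument splits into (i) convergence of finite-dimensional distributions with the multinomial covariance, (ii) transfer of the conditional limit to the self-normalized statistic, and (iii) $\ell^2$-tightness. The point of departure is an exact conditional identity read off from \eqref{epsm}: since $\alpha(1-\alpha)^{(i-1)}/i! = p_\alpha(i)$, conditioning on $\{K_n = k\}$ gives $\P(\mathbf{K}_n = (k_1,\ldots,k_n)\mid K_n = k) \propto \prod_{i} p_\alpha(i)^{k_i}/k_i!$ on the set $\{\sum_i k_i = k,\ \sum_i i k_i = n\}$. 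In words, \emph{given} $K_n = k$, the vector $\mathbf{K}_n$ is distributed as a $\mathrm{Multinomial}(k, p_\alpha)$ vector conditioned on the weighted sum $\sum_r r K_{r,n} = n$.

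For the finite-dimensional statements I would fix $m$ and study the first $m$ coordinates. The classical multinomial CLT gives, for unconditioned $X \sim \mathrm{Multinomial}(k, p_\alpha)$, that $\sqrt{k}\,(X_1/k - p_\alpha(1),\ldots,X_m/k - p_\alpha(m)) \to \mathcal{N}(0,\Gamma_\alpha^{(m)})$, where $\Gamma_\alpha^{(m)}$ is the leading $m\times m$ block of $\Gamma_\alpha$. The crucial step is to show that the conditioning on $\sum_r r K_{r,n} = n$ does not alter this limit. Here the heavy tail of the Sibuya law is decisive: from \eqref{Sibuya} one has $p_\alpha(r) \sim \alpha\,\Gamma(1-\alpha)^{-1} r^{-1-\alpha}$, so $p_\alpha$ has infinite mean and the weighted sum $\sum_r r X_r$ is governed by the large blocks, living on the stable scale $k^{1/\alpha}$, whereas the contribution $\sum_{r\le m} r X_r$ of the small blocks fluctuates only on scale $\sqrt{k}$. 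Since $K_n/n^\alpha \to S_{\alpha,\theta}$ by \eqref{eq:kalpha}, the relevant regime is $k \asymp n^\alpha$, i.e. $n \asymp k^{1/\alpha}$, which places the constraint value $n$ in the bulk of $\sum_r r X_r$; a local limit theorem for this stable-domain sum then shows that its density varies negligibly across the $O(\sqrt{k})$ range of $\sum_{r\le m} r X_r$, so the reweighting induced by the conditioning is asymptotically flat and $(X_1,\ldots,X_m)$ retains its multinomial fluctuations. This yields the conditional convergence $(Q_{1,n},\ldots,Q_{m,n})/\sqrt{k} \to \mathcal{N}(0,\Gamma_\alpha^{(m)})$ given $K_n = k$, uniformly over $k$ in deterministic windows around $S_{\alpha,\theta}\, n^\alpha$.

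It then remains to remove the conditioning and pass to the self-normalized quantity. Because $K_n \to \infty$ a.s. and concentrates in the window $k \asymp n^\alpha$, averaging the uniform conditional CLT over the law of $K_n$ gives the unconditional convergence of $(Q_{1,n},\ldots,Q_{m,n})/\sqrt{K_n}$; crucially, since we normalize by the \emph{observed} count $\sqrt{K_n}$ rather than by $n^{\alpha/2}$, the random $\alpha$-diversity $S_{\alpha,\theta}$ cancels and the limit is a genuine (non-mixed) centered Gaussian with the deterministic covariance $\Gamma_\alpha^{(m)}$. The same finite-dimensional conclusion can be reached, consistently with the martingale viewpoint of \citet{Ber(24)}, by decomposing each $K_{r,n}$ along the Chinese restaurant dynamics into a predictable part and a martingale, computing the joint predictable quadratic variation (which accumulates at rate $K_n\,\Gamma_\alpha$), and invoking a self-normalized multivariate martingale CLT, the self-normalization again producing a deterministic covariance.

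Finally, to upgrade from finite-dimensional convergence to distributional convergence in $\ell^2$, I would establish tightness by controlling the tail $\frac{1}{K_n}\sum_{r>m} Q_{r,n}^2 = \sum_{r>m} K_n(P_{r,n}-p_\alpha(r))^2$ uniformly in $n$, showing $\lim_{m\to\infty}\limsup_{n}\E\big[\frac{1}{K_n}\sum_{r>m}Q_{r,n}^2\big] = 0$. This uses the Sibuya tail $\sum_{r>m}p_\alpha(r)\to 0$ together with multinomial-type second-moment bounds of the form $\E[Q_{r,n}^2]\lesssim \E[K_n]\,p_\alpha(r)$, so that $\frac{1}{\E[K_n]}\sum_{r>m}\E[Q_{r,n}^2]\lesssim \sum_{r>m}p_\alpha(r)\to 0$. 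I expect the main obstacle to be precisely the decoupling in step (ii): a naive conditioning on the linear statistic $\sum_r r K_{r,n}$ would distort the covariance, and it is only the infinite-mean (stable) nature of the Sibuya weights that makes the constraint asymptotically independent of the small-block counts, turning a conditioned multinomial into an effectively unconditioned one and thereby producing exactly $\Gamma_\alpha$. Establishing this rigorously, via a local limit theorem in the stable domain with the requisite uniformity in $k$, together with the $\ell^2$-tightness moment estimates, is where the real work lies.
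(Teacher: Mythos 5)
Your route is genuinely different from the paper's. The paper never conditions on $K_n$: it builds, for each $r$, a bespoke martingale $M_{r,n}=a_{r,n}(S_{r,n}+(-1)^rp_\alpha(r)\theta/\alpha)$ out of the Chinese-restaurant increments (Lemma \ref{L-KEY}), applies a multidimensional martingale CLT with matrix normalization $V_n=n^{\alpha/2}\mathrm{diag}(n^{-1},\ldots,n^{-d})$ (Lemmas \ref{L-KEYMATRIX}--\ref{L-MATRIXCLT}), undoes the triangular change of variables by the delta method, and only at the very end recognizes $J_d\Lambda_dJ_d^T$ as $\mathrm{diag}(p_\alpha)-p_\alpha p_\alpha^T$ through a bivariate generating-function identity (Lemma \ref{L-COV}). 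Your approach instead starts from the conditional representation \eqref{eq:condep} --- which the paper states but uses only for moment formulae in the appendix --- and reads off the multinomial covariance directly, explaining structurally \emph{why} the limit is $\Gamma_\alpha$ rather than discovering it by computation. The mechanism you isolate is correct and is the right one: conditioning a multinomial on a linear statistic normally perturbs the covariance by a rank-one correction, and it is precisely because $\sum_r rX_r$ lives on the stable scale $k^{1/\alpha}\gg\sqrt{k}$ (infinite mean of $p_\alpha$) that the constraint is asymptotically flat over the Gaussian window of the small-block counts. This is the classical ``conditioning relation plus local limit theorem'' device for logarithmic and stable combinatorial structures in the spirit of \citet{Arr(03)}, so the strategy is sound and arguably more illuminating; what it costs is a uniform local limit theorem in the stable domain for the truncated Sibuya sums $T_{k-|x|}$, with uniformity both in the number of summands over an $O(\sqrt{k})$ range and in the target value over an $O(\sqrt{k})$ range around $n\asymp k^{1/\alpha}$, together with a lower bound on the one-sided stable density on the random compact window determined by $S_{\alpha,\theta}$. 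You correctly flag this as the real work, but it is left entirely unexecuted, whereas the paper's martingale computations, though heavier, are complete.

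Two concrete gaps remain. First, the de-conditioning must cope with the fact that $K_n/n^\alpha$ converges to the \emph{random} variable $S_{\alpha,\theta}$, not a constant; this is handled by proving the conditional CLT uniformly over deterministic windows $[\delta n^\alpha,\delta^{-1}n^\alpha]$ and letting $\delta\to0$, which should be stated. Second, your tightness estimate as written replaces $\E[Q_{r,n}^2/K_n]$ by $\E[Q_{r,n}^2]/\E[K_n]$, which is not legitimate. Either work conditionally --- bound $\E[(K_{r,n}-kp_\alpha(r))^2\mid K_n=k]\lesssim kp_\alpha(r)$ uniformly in $k,n,r$ using the conditioned-multinomial moments, which again requires controlling the distortion from the constraint --- or control negative moments of $K_n$ directly, which is exactly what the paper does via the independence-type identities \eqref{RATIO1}--\eqref{RATIO2} and Lemma \ref{lem:unif-neg-moment}. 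Neither fix is automatic, and the uniformity in $r$ of the resulting $o(1)$ term is essential for \eqref{toprove}.
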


It follows from \eqref{eq:kralpha_self} that the self-normalized Ewens-Pitman process $(P_{1,n},\,P_{2,r},\ldots)$ is a strongly consistent estimator of Sibuya distribution $p_{\alpha}$. Theorem \ref{main_teo} provides an effective tool to quantify uncertainty of this estimator by constructing a large-$n$ asymptotic confidence ball for $p_{\alpha}$.

\subsection{Estimation of the parameter $\alpha$}

A noteworthy special case of Theorem \ref{main_teo} is the case $r=1$, which connects to the estimation of the parameter $\alpha\in]0,1[$ in the Ewens-Pitman model. Several works have focused on the maximum likelihood estimator of $\alpha$, by relying on the explicit distribution \eqref{epsm}, and provided theoretical guarantees of the estimator in terms of consistency and Gaussian limits \citep{Fra(22),FN(21),Bal(24)}. However, the maximum likelihood estimator of $\alpha$ is not available in closed form and must be obtained numerically via iterative optimization routines, which are computationally demanding and sensitive to initialization. Instead, here, we propose to estimate $\alpha$ by
\begin{equation*}
\widehat{\alpha}_{n}=P_{1,n}=\frac{K_{1,n}}{K_{n}}.
\end{equation*}
The estimator $\widehat{\alpha}_{n}$ enjoys both practical and theoretical advantages. First, it is available in closed form, requiring only the computation of $K_{1,n}$ and $K_n$, in contrast with the maximum likelihood estimator. Second, it is computationally trivial and scales naturally to large sample sizes. Third, its asymptotic theoretical guarantees follows from the Ewens-Pitman model, and do not rely on approximations of the likelihood function. The next corollary follows from Equation \eqref{eq:kralpha_all} and Theorem \ref{main_teo}, providing theoretical guarantees of $\widehat{\alpha}_{n}$ as an estimator of the parameter $\alpha$.
\begin{cor}
\label{main_cor}
Assume that $\alpha\in]0,1[$ and  $\alpha+\theta>0$. Then, we have the almost sure convergence
\begin{equation}
\label{ASCVGALPHA}
\lim_{n\rightarrow+\infty}\widehat{\alpha}_{n}=\alpha\qquad\text{a.s.}
\end{equation}
Moreover, we also have the asymptotic normality
\begin{equation}
\label{WCVGALPHA}
\sqrt{K_n}( \widehat{\alpha}_{n} -\alpha )
\underset{n\rightarrow+\infty}{\overset{\cL}{\longrightarrow}}
\mathscr{N}(0, \alpha(1-\alpha)).
\end{equation} 
\end{cor}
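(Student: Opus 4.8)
The plan is to obtain both statements directly from the results already established for the self-normalized process, since the estimator $\widehat{\alpha}_{n}=P_{1,n}$ is exactly the first coordinate of $(P_{1,n},P_{2,n},\ldots)$. The key preliminary observation is that the first entry of the Sibuya distribution \eqref{Sibuya} is
\begin{equation*}
p_{\alpha}(1)=\frac{\alpha(1-\alpha)^{(0)}}{1!}=\alpha,
\end{equation*}
because the rising factorial of order zero equals one. Hence the target limit $\alpha$ appearing in both \eqref{ASCVGALPHA} and \eqref{WCVGALPHA} is nothing but $p_{\alpha}(1)$, and the whole corollary is a coordinate-wise reading of the $\ell^{2}$ statements.

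For the almost sure convergence \eqref{ASCVGALPHA}, I would simply restrict the $\ell^{2}$-valued almost sure convergence \eqref{eq:kralpha_self} to its first coordinate. Since convergence in $\ell^{2}$ forces convergence of each coordinate, \eqref{eq:kralpha_self} immediately yields $\lim_{n\to+\infty}P_{1,n}=p_{\alpha}(1)=\alpha$ almost surely, which is precisely \eqref{ASCVGALPHA}.

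For the asymptotic normality \eqref{WCVGALPHA}, I would apply the continuous mapping theorem to Theorem \ref{main_teo}. The coordinate projection $\pi_{1}\colon\ell^{2}\to\R$, $\pi_{1}(x)=x_{1}$, is a bounded, hence continuous, linear functional, so the $\ell^{2}$ distributional convergence \eqref{eq:gaussproc} transfers to
\begin{equation*}
\sqrt{K_{n}}\bigl(P_{1,n}-p_{\alpha}(1)\bigr)\overset{\cL}{\longrightarrow}\pi_{1}\bigl(\mathcal{G}(\Gamma_\alpha)\bigr).
\end{equation*}
As a continuous linear image of a centered Gaussian process, $\pi_{1}(\mathcal{G}(\Gamma_\alpha))$ is a centered real Gaussian variable whose variance is the $(1,1)$ entry of $\Gamma_\alpha$. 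From the definition \eqref{DEFGamma},
\begin{equation*}
(\Gamma_\alpha)_{1,1}=p_{\alpha}(1)-p_{\alpha}(1)^{2}=\alpha(1-\alpha),
\end{equation*}
which, together with $p_{\alpha}(1)=\alpha$, delivers \eqref{WCVGALPHA}.

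I do not expect any genuine obstacle in the corollary itself: all the analytic difficulty is absorbed into Theorem \ref{main_teo}, which we are entitled to assume. The only point deserving an explicit word is the passage from the infinite-dimensional Gaussian limit to a one-dimensional marginal, namely that $\pi_{1}(\mathcal{G}(\Gamma_\alpha))$ is itself Gaussian with the announced variance; this is justified precisely because $\pi_{1}$ is continuous and linear and a continuous linear functional of a Gaussian process is Gaussian. Everything else reduces to the elementary identities $p_{\alpha}(1)=\alpha$ and $(\Gamma_\alpha)_{1,1}=\alpha(1-\alpha)$.
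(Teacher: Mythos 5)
Your proposal is correct and matches the paper's own (implicit) derivation: the authors state that the corollary follows from the coordinate-wise almost sure convergence and from Theorem \ref{main_teo}, exactly via the identities $p_{\alpha}(1)=\alpha$ and $(\Gamma_\alpha)_{1,1}=\alpha(1-\alpha)$ that you use. The only remark worth adding is that the one-dimensional CLT \eqref{WCVGALPHA} is also available directly as \eqref{LCLT1} in Lemma \ref{L-ScalarCLT}, so the full infinite-dimensional machinery is not strictly needed for this coordinate.
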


Corollary \ref{main_cor} allows us to provide a large-$n$ asymptotic confidence interval for the parameter $\alpha$. More precisely, for any prescribed confidence level $1-\gamma\in]0,1[$, let $z_{1-\gamma/2}$ stands for the $(1-\gamma/2)$-quantile of the $\cN(0,1)$ distribution. Then an asymptotic $(1-\gamma)$-confidence interval for $\alpha$ is given by
\begin{equation}\label{CI_alpha}
\left[\widehat{\alpha}_{n} - z_{1-\gamma/2}\sqrt{\frac{\widehat{\alpha}_{n}(1-\widehat{\alpha}_{n})}{K_n}},\,\widehat{\alpha}_{n} + z_{1-\gamma/2}\sqrt{\frac{\widehat{\alpha}_{n}(1-\widehat{\alpha}_{n})}{K_n}}
\right].
\end{equation}

\subsection{Organization of the paper}

The paper is structured as follows. Section \ref{sec2} is devoted to the proof of Theorem \ref{main_teo}. In Section \ref{sec3}, we discuss some directions of future research on the self-normalized Ewens-Pitman process. The appendix is dedicated to the sequential construction of the Ewens-Pitman model and to moment formulae for the random variables $K_{n}$ and $(K_{n}, K_{r,n})$.


\section{Proof of Theorem \ref{main_teo}}\label{sec2}
The proof of Theorem \ref{main_teo} relies on a tailor-made martingale construction for the $K_{r,n}$'s, in combination with a non-standard central limit theorem for multi-dimensional martingales \citep{Tou(91),Tou(94)}, see also \citet{Ber(21)} for details.

\subsection{The martingale construction} 
\label{Sub-MC}


From the sequential or generative construction of the Ewens-Pitman model \citep[Proposition 9]{Pit(95)}, we have for all $n \geq 1$,
\begin{equation}\label{DEFK}
K_{n+1} = K_n + \xi_{n+1},
\end{equation}
where the conditional distribution of $\xi_{n+1}$ given the $\sigma$-algebra $\cF_n=\sigma(K_1, \ldots,K_n)$ is the Bernouilli $\cB(p_n)$ distribution with parameter
\begin{equation}
\label{DEFPN}
p_n=\frac{\alpha K_n+ \theta}{n+\theta}.
\end{equation} 
We refer the reader to Appendix \ref{appa} for more details on \eqref{DEFPN}. Let $(\mathcal{F}_{r,n})$ be the sequence of $\sigma$-algebras given by $\mathcal{F}_{1,n}=\sigma(K_1, \ldots,K_n,K_{1,1},\ldots,K_{1,n})$ and, for all $r\geq2$, $\mathcal{F}_{r,n}=\mathcal{F}_{1,n}\cup\mathcal{G}_{2,n}\cup\cdots\cup\mathcal{G}_{r,n}$ where $\mathcal{G}_{r,n}=\sigma(K_{r,1},\ldots,K_{r,n})$. Again, by relying on the sequential or generative construction of the  Ewens-Pitman model \citep{Ber(24)}, we have for all $r \geq 2$ and for all $n \geq 1$,
\begin{equation}\label{DEFKr}
K_{r,n+1}=K_{r,n}+\xi_{r,n+1},
\end{equation}
where $\xi_{r,n+1}$ given the $\sigma$-algebra $\mathcal{F}_{r,n}$ is distributed according to the following probability distribution
\begin{equation}
\label{DEFXIKr}
   \dP(\xi_{r,n+1}=k\,|\,\mathcal{F}_{r,n})=\left \{ \begin{array}{ccc}
    {p_{r,n}}  & \text{ if } & \ \ k=1 \vspace{1ex}\\
    {q_{r,n}}  & \text{ if } & \ \ k=-1 \vspace{1ex}\\
     {1-p_{r,n}-q_{r,n}} & \text{ if }  & \ \ k=0,
   \end{array}  \right.
\end{equation}
with
\begin{equation}
\label{DEFIPr}
p_{r,n}=\frac{(r-1-\alpha)K_{r-1,n}}{n+\theta}\qquad \text{and}\qquad q_{r,n}=\frac{(r-\alpha)K_{r,n}}{n+\theta}.
\end{equation}
We refer the reader to Appendix \ref{appa} for more details on \eqref{DEFIPr}. Decomposition \eqref{DEFKr} holds for $r=1$ by taking $p_{1,n}=p_n$. One can also observe that for all $r \geq 2$, $p_{r,n}=q_{r-1,n}$. 
From now on, let $(a_{r,n})$ be the sequence defined by $a_{r,n}=1$ for all $1 \leq n \leq r$ and for all 
$n\geq r+1$,
\begin{equation}
\label{DEFAr}
a_{r,n}=\prod_{k=r}^{n-1}\left(\frac{k+\theta}{k+\alpha+\theta-r}\right)=\frac{(\theta+1)^{(n-1)}}{(\alpha+\theta-r+1)^{(n-1)}}.
\end{equation}
Moreover, denote by $(M_{r,n})$ the sequence of random variables given, for all $r \geq 1$, by
\begin{equation}
\label{DEFMr}
M_{r,n}=a_{r,n}\left(S_{r,n}+ \frac{(-1)^{r} p_{\alpha}(r) \theta}{\alpha}\right)
\end{equation}
with
\begin{equation}
\label{DEFSNr}
S_{r,n}= \sum_{i=1}^{r}b_{r,i}K_{i,n}+(-1)^r p_{\alpha}(r) K_{n},
\end{equation}
where for all $i=1,\ldots,r$, 
\begin{equation}
\label{DEFbr}
b_{r,i}=(-1)^{r-i}\frac{(i-\alpha)^{(r-i)}}{(r-i)!}.
\end{equation}

\begin{lem}
\label{L-KEY}
For all $r \geq 1$, $(M_{r,n})$ is a locally square integrable martingale with predictable quadratic variation $\langle M_r \rangle_{n}$ satisfying
\begin{equation}
\label{LIMIPINMr}
\lim_{n\rightarrow+\infty}\frac{\langle M_{r}\rangle_{n}}{n^{2r-\alpha}}=p_{\alpha}(r)\frac{(r-\alpha)^{(r)}}{r!}\left(\frac{\Gamma(\alpha+\theta-r+1)}{\Gamma(\theta+1)}\right)^{2}S_{\alpha,\theta}\qquad\text{a.s.}
\end{equation}
\end{lem}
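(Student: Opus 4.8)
The plan is to establish the two claims of Lemma \ref{L-KEY} separately. For the martingale property, note first that $K_{i,n}\le n$ makes each $M_{r,n}$ bounded, hence square integrable, so only $\dE[M_{r,n+1}\mid\cF_{r,n}]=M_{r,n}$ remains. Writing $M_{r,n}=a_{r,n}(S_{r,n}+c_r)$ with $c_r=(-1)^rp_{\alpha}(r)\theta/\alpha$ and using $a_{r,n}/a_{r,n+1}=(n+\alpha+\theta-r)/(n+\theta)$ for $n\ge r$ from \eqref{DEFAr}, the claim reduces to the single identity $(n+\theta)\dE[S_{r,n+1}-S_{r,n}\mid\cF_{r,n}]=-(r-\alpha)(S_{r,n}+c_r)$. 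To evaluate the left-hand side I would use \eqref{DEFK}--\eqref{DEFIPr}; the analogue of \eqref{DEFXIKr} at each level $i\le r$ holds conditionally on the larger $\cF_{r,n}$ as well, since the single move at step $n+1$ has $\cF_{r,n}$-measurable probabilities for all block sizes $\le r$, so by the tower property $\dE[\xi_{i,n+1}\mid\cF_{r,n}]=p_{i,n}-q_{i,n}$. Substituting these and collecting the coefficient of each $K_{j,n}$ and of $K_n$ turns the identity into two algebraic relations for the weights \eqref{DEFbr}: the telescoping relation $(j-\alpha)b_{r,j+1}=-(r-j)b_{r,j}$ for $1\le j\le r-1$, and the boundary relation $\alpha b_{r,1}=(-1)^{r+1}r\,p_{\alpha}(r)$, both immediate from \eqref{DEFbr} and \eqref{Sibuya}.

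The key to $\langle M_r\rangle_n$ is the observation that one step of the Chinese restaurant process performs exactly one move $J_{n+1}$ — open a new block, or enlarge an existing block of some size $j$ — so that $S_{r,n+1}-S_{r,n}$ is a \emph{deterministic function} $V_{J_{n+1}}$ of that move, with $V_0=b_{r,1}+(-1)^rp_{\alpha}(r)$ for a new block, $V_j=b_{r,j+1}-b_{r,j}$ for $1\le j\le r-1$, $V_r=-1$, and $V_j=0$ for $j>r$. The two relations above compress these to $V_0=\frac{r-\alpha}{r}b_{r,1}$ and $V_j=-\frac{r-\alpha}{j-\alpha}b_{r,j}$ for $1\le j\le r$. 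Since $\Delta M_{r,n+1}=a_{r,n+1}\big(V_{J_{n+1}}-\dE[V_{J_{n+1}}\mid\cF_{r,n}]\big)$, the martingale property gives
\[
\langle M_r\rangle_n=\sum_k a_{r,k+1}^2\big(\dE[V_{J_{k+1}}^2\mid\cF_{r,k}]-\Delta_k^2\big),\qquad \Delta_k=-\tfrac{r-\alpha}{k+\theta}(S_{r,k}+c_r),
\]
with $\dE[V_{J_{k+1}}^2\mid\cF_{r,k}]=\big(V_0^2(\alpha K_k+\theta)+\sum_{j=1}^rV_j^2(j-\alpha)K_{j,k}\big)/(k+\theta)$, the latter being $\cF_{r,k}$-measurable precisely because $V_j=0$ for $j>r$.

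For the limit, \eqref{DEFAr} and $(a)^{(m)}=\Gamma(a+m)/\Gamma(a)$ give $a_{r,n}\sim\kappa_r n^{r-\alpha}$ with $\kappa_r=\Gamma(\alpha+\theta-r+1)/\Gamma(\theta+1)$. Inserting the almost sure limits \eqref{eq:kalpha} and \eqref{eq:kralpha} into the second-moment term yields $\dE[V_{J_{k+1}}^2\mid\cF_{r,k}]\sim C_rS_{\alpha,\theta}k^{\alpha-1}$ with $C_r=\alpha V_0^2+\sum_{j=1}^r(j-\alpha)p_{\alpha}(j)V_j^2$, so that $a_{r,k+1}^2\dE[V_{J_{k+1}}^2\mid\cF_{r,k}]\sim\kappa_r^2C_rS_{\alpha,\theta}k^{2r-\alpha-1}$. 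A Toeplitz (Cesàro) argument together with $\sum_{k\le n}k^{2r-\alpha-1}\sim n^{2r-\alpha}/(2r-\alpha)$ then produces the $n^{2r-\alpha}$ growth. The remainder $\sum_k a_{r,k+1}^2\Delta_k^2$ is negligible: the cancellation identity $\sum_{i=1}^rb_{r,i}p_{\alpha}(i)+(-1)^rp_{\alpha}(r)=0$ combined with \eqref{eq:kralpha} forces $S_{r,k}+c_r=o(k^{\alpha})$, whence $\sum_{k\le n}a_{r,k+1}^2\Delta_k^2=o(n^{2r-1})=o(n^{2r-\alpha})$ since $\alpha<1$.

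The heart of the matter, and the step I expect to be hardest, is the closed-form evaluation of $C_r$, i.e. the identity $C_r/(2r-\alpha)=p_{\alpha}(r)(r-\alpha)^{(r)}/r!$, which converts the generic constant $\kappa_r^2C_rS_{\alpha,\theta}/(2r-\alpha)$ into exactly \eqref{LIMIPINMr}. Using $V_0=\frac{r-\alpha}{r}b_{r,1}$ and $V_j=-\frac{r-\alpha}{j-\alpha}b_{r,j}$, it reduces to the purely algebraic identity
\[
\frac{\alpha b_{r,1}^2}{r^2}+\sum_{j=1}^r\frac{b_{r,j}^2\,p_{\alpha}(j)}{j-\alpha}=\frac{2r-\alpha}{(r-\alpha)^2}\,p_{\alpha}(r)\,\frac{(r-\alpha)^{(r)}}{r!}
\]
in rising factorials of $\alpha$. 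Having verified it directly for $r=1,2$, I would prove the general case by induction on $r$, the telescoping relation $(j-\alpha)b_{r,j+1}=-(r-j)b_{r,j}$ driving the inductive step. This algebraic identity, rather than any probabilistic ingredient, is where the real difficulty lies.
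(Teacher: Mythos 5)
Your proposal is correct in structure and follows the same overall architecture as the paper: the martingale property via the identity $a_{r,n+1}\gamma_{r,n}=a_{r,n}$ with $\gamma_{r,n}=1-(r-\alpha)/(n+\theta)$, the quadratic variation via the conditional second moment of the increment, the almost sure limits \eqref{eq:kalpha}--\eqref{eq:kralpha}, and Toeplitz. Two points differ. First, your bookkeeping of the one-step dynamics is genuinely cleaner: you encode the increment $S_{r,n+1}-S_{r,n}$ as a deterministic function $V_{J_{n+1}}$ of the single categorical move of the Chinese restaurant process, whereas the paper works with the vector $\zeta_{r,n+1}=\sum_i b_{r,i}\xi_{i,n+1}+(-1)^rp_\alpha(r)\xi_{n+1}$ and must tabulate all conditional cross-moments $\E[\xi_{i,n+1}\xi_{j,n+1}\,|\,\cF_{r,n}]$ (zero for $|i-j|\geq 2$, $-q_{i,n}$ for adjacent indices, etc.); the two computations are equivalent, and your observation that $V_j=0$ for $j>r$ is exactly what makes the conditional second moment $\cF_{r,n}$-measurable. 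Your treatment of the remainder $\sum_k a_{r,k+1}^2\Delta_k^2$ via $S_{r,k}+c_r=o(k^\alpha)$ matches the paper's use of \eqref{ASCVGPCEzetar1}.

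Second, the one place where your proposal is not fully executed is precisely the step you flag as hardest: the closed-form evaluation of $C_r$. Your identity
$\frac{\alpha b_{r,1}^2}{r^2}+\sum_{j=1}^r\frac{b_{r,j}^2\,p_{\alpha}(j)}{j-\alpha}=\frac{2r-\alpha}{(r-\alpha)^2}\,p_{\alpha}(r)\,\frac{(r-\alpha)^{(r)}}{r!}$
is true (it is equivalent, after substituting $V_0=\frac{r-\alpha}{r}b_{r,1}$ and $V_j=-\frac{r-\alpha}{j-\alpha}b_{r,j}$ back, to the statement $C_r=(2r-\alpha)\big(\sum_i b_{r,i}^2p_\alpha(i)-p_\alpha^2(r)\big)$ with $\sum_i b_{r,i}^2p_\alpha(i)=-p_\alpha^2(r)\big(\tfrac{(r-\alpha)^{(r)}}{(-\alpha)^{(r)}}-1\big)$), but you only verify $r=1,2$ and sketch an induction. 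The paper avoids the induction entirely: using $(2i-\alpha)+2(r-i)=2r-\alpha$ the weight in the sum becomes constant, and the remaining sum $\sum_i b_{r,i}^2p_\alpha(i)$ is recognized as a terminating hypergeometric series evaluated by the Chu--Vandermonde identity \eqref{ChuVan}. If your induction proves troublesome, converting your weighted sum to the paper's constant-weight form and invoking Chu--Vandermonde is the shortest way to close this last step.
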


\begin{proof}
It follows from \eqref{DEFK}, \eqref{DEFKr}, \eqref{DEFXIKr} and \eqref{DEFSNr} that
\begin{align}
\label{eq:esr0}
\E[S_{r,n+1}\,|\,\mathcal{F}_{r,n}]
&=\E\left[\sum_{i=1}^{r}b_{r,i}K_{i,n+1}+(-1)^r p_{\alpha}(r) K_{n+1}\,\Bigr|\,\mathcal{F}_{r,n}\right] \qquad \text{a.s.}
\notag\\
&=\E\left[\sum_{i=1}^{r}b_{r,i}(K_{i,n}+\xi_{i,n+1})+(-1)^r p_{\alpha}(r)(K_{n}+\xi_{n+1})\,\Bigr|\,\mathcal{F}_{r,n}\right]  \qquad \text{a.s.} \notag \\
&=S_{r,n}+ \sum_{i=1}^{r}b_{r,i}(p_{i,n}-q_{i,n}) + (-1)^rp_{\alpha}(r) p_{n}  \qquad \text{a.s.}
\end{align}
Consequently, as $p_{1,n}=p_n$ and for all $i \geq 2$, $p_{i,n}=q_{i-1,n}$, we obtain from \eqref{eq:esr0} that
\begin{equation}
\label{eq:esr1}
\E[S_{r,n+1}\,|\,\mathcal{F}_{r,n}]=S_{r,n}+((-1)^rp_{\alpha}(r)+b_{r,1})p_{n}-q_{r,n}+\sum_{i=1}^{r-1}(b_{r,i+1}-b_{r,i})q_{i,n}  \qquad \text{a.s.}
\end{equation}
Moreover, we clearly have from \eqref{DEFbr} that
\begin{equation*}
b_{r,i+1}= \left(\frac{r-i}{\alpha -i}\right)b_{r,i} \hspace{1cm} \text{and} \hspace{1cm} b_{r,i+1}-b_{r,i}= \left(\frac{r-\alpha}{\alpha -i}\right)b_{r,i}
\end{equation*}
Therefore, we deduce from \eqref{DEFPN}, \eqref{DEFIPr}, \eqref{DEFSNr}, \eqref{eq:esr1} together with straighforward calculation that
\begin{equation}
\label{eq:esr2}
\E[S_{r,n+1}\,|\,\mathcal{F}_{r,n}]=\gamma_{r,n}S_{r,n}-(1-\gamma_{r,n})\frac{(-1)^{r}p_{\alpha}(r) \theta}{\alpha}  \qquad \text{a.s.}
\end{equation}
where
\begin{displaymath}
\gamma_{r,n}=1-\left(\frac{r-\alpha}{n+\theta}\right).
\end{displaymath}
Hereafter, we obtain from \eqref{DEFMr} and \eqref{eq:esr2} and the elementary identity $a_{r,n+1}\gamma_{r,n}= a_{r,n}$
that for all $n\geq1$,
\begin{align*}
\E[M_{r,n+1}\,|\,\mathcal{F}_{r,n}]&=\E\left[a_{r,n+1}\left(S_{r,n+1}+\frac{(-1)^{r} p_{\alpha}(r) \theta}{\alpha}\right)\,\Bigr|\,\mathcal{F}_{r,n}\right] \qquad \text{a.s.}\\
&=a_{r,n+1}\left(\E[S_{r,n+1}\,|\,\mathcal{F}_{r,n}]+\frac{(-1)^{r} p_{\alpha}(r) \theta}{\alpha}\right) \qquad \text{a.s.}\\
&=a_{r,n+1}\left(\gamma_{r,n}S_{r,n}-(1-\gamma_{r,n})\frac{(-1)^{r}p_{\alpha}(r) \theta}{\alpha}+\frac{(-1)^{r} p_{\alpha}(r) \theta}{\alpha}\right) \qquad \text{a.s.}\\
&=a_{r,n+1}\gamma_{r,n} \left(S_{r,n}+\frac{(-1)^{r} p_{\alpha}(r) \theta}{\alpha}\right)  \qquad \text{a.s.}\\
&=a_{r,n} \left(S_{r,n}+\frac{(-1)^{r} p_{\alpha}(r) \theta}{\alpha}\right)= M_{r,n} \qquad \text{a.s.}
\end{align*}
Moreover, the sequence $(M_{r,n})$ is square integrable as for all $1 \leq i \leq r$, $K_{i,n}\leq K_n \leq n$. Consequently, $(M_{r,n})$ is a locally square
integrable martingale. Its predictable quadratic variation $\langle M_r \rangle_{n}$ is given by
\begin{equation}
\label{eq:CVMr1}
\langle M_r \rangle_{n} = \sum_{k=1}^{n-1} \E[(\Delta  M_{r,k+1})^{2}\,|\,\mathcal{F}_{r,k}]
\end{equation}
where $\Delta M_{r,n+1}=M_{r,n+1}-M_{r,n}$. It follows from \eqref{DEFAr}, \eqref{DEFMr} and \eqref{eq:esr2} that
\begin{equation}
\label{DECOMG}
\Delta M_{r,n+1}= a_{r,n+1} \Big( S_{r,n+1} - \E[S_{r,n+1}\,|\,\mathcal{F}_{r,n}]\Big)=a_{r,n+1} \Big( \zeta_{r,n+1} - \E[\zeta_{r,n+1}\,|\,\mathcal{F}_{r,n}]\Big)
\end{equation}
where
\begin{equation*}
\zeta_{r,n+1}= \sum_{i=1}^{r}b_{r,i}\xi_{i,n+1}+(-1)^r p_{\alpha}(r) \xi_{n+1}.
\end{equation*}
Consequently, we immediately obtain from decomposition \eqref{DECOMG} that
\begin{equation}
\label{eq:CVMr2}
\E[(\Delta M_{r,n+1})^{2}\,|\,\mathcal{F}_{r,n}]=a_{r,n+1}^{2}\Big(\E[\zeta_{r,n+1}^{2}\,|\,\mathcal{F}_{r,n}]-\E^2[\zeta_{r,n+1}\,|\,\mathcal{F}_{r,n}]\Big) \qquad \text{a.s.}
\end{equation}
We already saw from \eqref{eq:esr0} that
\begin{equation}
\label{CEzeta1}
\E[\zeta_{r,n+1}\,|\,\mathcal{F}_{r,n}]= \sum_{i=1}^{r}b_{r,i}(p_{i,n}-q_{i,n}) + (-1)^rp_{\alpha}(r) p_{n} \qquad \text{a.s.}
\end{equation}
In addition, we also have $\E[\xi_{n+1}^2\,|\,\mathcal{F}_{r,n}]=p_n$ and $\E[\xi_{i,n+1}^2\,|\,\mathcal{F}_{r,n}]=p_{i,n}+q_{i,n}$ for $1 \leq i \leq r$.
Moreover, $\E[\xi_{i,n+1} \xi_{j,n+1}\,|\,\mathcal{F}_{r,n}]=0$ as soon as $|i-j| \geq 2$. Furthermore, one can observe that $\E[\xi_{i,n+1} \xi_{i+1,n+1}\,|\,\mathcal{F}_{r,n}]=-q_{i, n}$ for $1 \leq i \leq r-1$ and
$\E[\xi_{i,n+1} \xi_{i-1,n+1}\,|\,\mathcal{F}_{r,n}]=-q_{i-1,n}$ for $2 \leq i \leq r$ and
$\E[\xi_{n+1} \xi_{i,n+1} \,|\,\mathcal{F}_{r,n}]=p_n$ for $i=1$, $\E[\xi_{n+1} \xi_{i,n+1} \,|\,\mathcal{F}_{r,n}]=0$ for $2 \leq i \leq r$. Hence,
we have 
\begin{equation}
\label{CEzeta2}
\E[\zeta_{r,n+1}^2\,|\,\mathcal{F}_{r,n}]= \sum_{i=1}^{r}b_{r,i}^2(p_{i,n}+q_{i,n}) -2\sum_{i=1}^{r-1}b_{r,i}b_{r,i+1}q_{i,n} - \Bigl(\frac{2r-\alpha}{\alpha}\Bigr) p_{\alpha}^2(r)p_n
\quad \text{a.s.}
\end{equation}
We have from the almost sure convergences \eqref{eq:kalpha} and \eqref{eq:kralpha} together with \eqref{DEFPN} and \eqref{DEFIPr} that
\begin{equation}
\label{ASCVGPN}
\lim_{n \rightarrow \infty} n^{1-\alpha} p_n= \alpha S_{\alpha, \theta} \hspace{1cm} \text{a.s.}
\end{equation}
and for all $i \geq 1$,
\begin{equation}
\label{ASCVGQIN}
\lim_{n \rightarrow \infty} n^{1-\alpha} q_{i,n}= (i-\alpha) p_{\alpha}(i) S_{\alpha, \theta} \hspace{1cm} \text{a.s.}
\end{equation}
However, for all $i \geq 2$, $p_{i,n}=q_{i-1,n}$. Hence, we obtain from \eqref{ASCVGQIN} that for all $i \geq 2$, 
\begin{align}
\label{ASCVGPQINDif}
\lim_{n \rightarrow \infty} n^{1-\alpha} (p_{i,n}- q_{i,n})
&= ((i-1-\alpha) p_{\alpha}(i-1) -(i-\alpha) p_{\alpha}(i)) S_{\alpha, \theta} \notag
\hspace{1cm} \text{a.s.} \\
&=\alpha p_{\alpha}(i) S_{\alpha, \theta} \hspace{1cm} \text{a.s.}
\end{align}
Therefore, it follows from \eqref{CEzeta1}, \eqref{ASCVGPN} and \eqref{ASCVGPQINDif} that
\begin{equation}
\label{ASCVGPCEzetar1}
\lim_{n \rightarrow \infty} n^{1-\alpha} \E[\zeta_{r,n+1}\,|\,\mathcal{F}_{r,n}]= \alpha  \Big( \sum_{i=1}^{r}b_{r,i} p_{\alpha}(i) + (-1)^r  p_{\alpha}(r) \Big)S_{\alpha, \theta} =0 \hspace{1cm} \text{a.s.}
\end{equation}
since by the Binomial theorem, the sum in \eqref{ASCVGPCEzetar1} reduces to
\begin{align}
\sum_{i=1}^{r}b_{r,i} p_{\alpha}(i) &= \sum_{i=1}^{r}
(-1)^{r-i}\frac{(i-\alpha)^{(r-i)}}{(r-i)!} \frac{\alpha(1-\alpha)^{(i-1)}}{i!} \notag \\
&=\frac{\alpha(1-\alpha)^{(r-1)}}{r!} \sum_{i=1}^{r} \binom{r}{i} (-1)^{r-i} \notag \\
&=-(-1)^r  p_{\alpha}(r).
\label{BinomialBP}
\end{align}
By the same token, we deduce from \eqref{CEzeta2}, \eqref{ASCVGPN} and \eqref{ASCVGQIN} that 
\begin{align}
\label{ASCVGPCEzetar2}
\lim_{n \rightarrow \infty} n^{1-\alpha} \E[\zeta_{r,n+1}^2\,|\,\mathcal{F}_{r,n}]
&= \Big( \sum_{i=1}^{r}b_{r,i}^2 ((2i - \alpha)+2(r-i)) p_{\alpha}(i) - (2r-\alpha)p_{\alpha}^2(r) \Big)S_{\alpha, \theta} \notag \\  
&= (2r- \alpha) \Big( \sum_{i=1}^{r}b_{r,i}^2  p_{\alpha}(i) - p_{\alpha}^2(r) \Big)S_{\alpha, \theta} 
\qquad \text{a.s.} \notag \\  
&=  p_{\alpha}(r) 
\frac{(r-\alpha)^{(r+1)}}{r!} S_{\alpha, \theta}  \qquad \text{a.s.}
\end{align}
since the Chu-Vandermonde identity for the rising factorial
\begin{equation}
\label{ChuVan}
\sum_{k=0}^n (-n)^{(k)} \frac{(b)^{(k)}}{(a)^{(k)} k!}
= \sum_{k=0}^n (-1)^{k} \binom{n}{k} \frac{(b)^{(k)}}{(a)^{(k)}}= \frac{(a-b)^{(n)}}{(a)^{(n)}}
\end{equation}
which holds for all $a,b \in \dR$ with $a \notin \{0,-1,-2,\ldots\}$, implies via \eqref{DEFbr} that
\begin{align*}
\sum_{i=1}^{r}b_{r,i}^2  p_{\alpha}(i)&=\sum_{i=1}^{r}\left(\frac{(i-\alpha)^{(r-i)}}{(r-i)!}\right)^{2}\frac{\alpha(1-\alpha)^{(i-1)}}{i!}\\
&=\Gamma^2(r-\alpha)\sum_{i=1}^{r}\frac{\alpha(1-\alpha)^{(i-1)}}{\Gamma^2(i-\alpha)((r-i)!)^{2}i!}\\
&=-\frac{\Gamma^2(r-\alpha)}{\Gamma^2(-\alpha)(r!)^{2}}\sum_{i=1}^{r}\frac{(-r)^{(i)}(-r)^{(i)}}{(-\alpha)^{(i)}i!}\\
&=-p_\alpha^2(r)\left(\frac{(r-\alpha)^{(r)}}{(-\alpha)^{(r)}}-1\right).
\end{align*}
Hereafter, we obtain from standard results on the asymptotic behavior of
the Euler Gamma function together with \eqref{DEFAr} that
\begin{equation}
\label{CVGANr}
\lim_{n\rightarrow+\infty} \frac{a_{r,n}}{n^{r-\alpha}}=\lim_{n\rightarrow+\infty}\frac{1}{n^{r-\alpha}}
\frac{(\theta+1)^{(n-1)}}{(\alpha+\theta-r+1)^{(n-1)}}=\frac{\Gamma(\alpha+\theta-r+1)}{\Gamma(\theta+1)}.
\end{equation}
Therefore, it follows from \eqref{eq:CVMr2}, \eqref{ASCVGPCEzetar1}, \eqref{ASCVGPCEzetar2} and \eqref{CVGANr} that
\begin{equation*}
\lim_{n\rightarrow+\infty} \frac{\E[(\Delta M_{r,n+1})^{2}\,|\,(\mathcal{F}_{r,n})]}{n^{2r-\alpha -1}}=
p_{\alpha}(r) \frac{(r-\alpha)^{(r+1)}}{r!} \left(\frac{\Gamma(\alpha+\theta-r+1)}{\Gamma(\theta+1)}\right)^2 S_{\alpha, \theta}  \qquad \text{a.s.}
\end{equation*}
which implies through a direct application of Toeplitz lemma that
\begin{equation*}
\lim_{n\rightarrow+\infty}\frac{\langle M_{r}\rangle_{n}}{n^{2r-\alpha}}=p_{\alpha}(r)\frac{(r-\alpha)^{(r)}}{r!}\left(\frac{\Gamma(\alpha+\theta-r+1)}{\Gamma(\theta+1)}\right)^{2}S_{\alpha,\theta}\qquad\text{a.s.}
\end{equation*}
completing the proof of Lemma \ref{L-KEY}.
\end{proof}


\subsection{A central limit theorem} 
\label{Sub-SCLT}


\begin{lem}
\label{L-ScalarCLT}
Assume that $\alpha\in]0,1[$ and  $\alpha+\theta>0$. Then, we have the asymptotic normality
\begin{equation}
\label{LCLT1}
\sqrt{K_n}( P_{1,n} -\alpha )
\underset{n\rightarrow+\infty}{\overset{\cL}{\longrightarrow}}
\cN(0, \alpha(1-\alpha)).
\end{equation} 
Moreover, for all $r\geq 2$, we also have
\begin{equation}
\label{LCLTr}
\sqrt{K_{n}} \left(P_{r,n}   + \sum_{i=1}^{r-1}b_{r,i}P_{i,n} +(-1)^r p_{\alpha}(r) \right)\underset{n\rightarrow+\infty}{\overset{\cL}{\longrightarrow}} \cN\left(0, p_{\alpha}(r)\frac{(r-\alpha)^{(r)}}{r!}\right).
\end{equation}
\end{lem}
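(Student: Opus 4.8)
The plan is to recognize the quantity inside \eqref{LCLTr} as a rescaling of the martingale $(M_{r,n})$ from Lemma~\ref{L-KEY}, and then push that martingale through a self-normalized central limit theorem. First I would note that $b_{r,r}=1$ by \eqref{DEFbr}, so dividing $S_{r,n}$ in \eqref{DEFSNr} by $K_n$ gives
\[
\frac{S_{r,n}}{K_n}=P_{r,n}+\sum_{i=1}^{r-1}b_{r,i}P_{i,n}+(-1)^r p_{\alpha}(r),
\]
whence the left-hand side of \eqref{LCLTr} is exactly $S_{r,n}/\sqrt{K_n}$. The case $r=1$ is the specialization $S_{1,n}=K_{1,n}-\alpha K_n$, for which $S_{1,n}/\sqrt{K_n}=\sqrt{K_n}(P_{1,n}-\alpha)$ and $p_{\alpha}(1)(1-\alpha)^{(1)}/1!=\alpha(1-\alpha)$, so both \eqref{LCLT1} and \eqref{LCLTr} follow from one distributional limit for $S_{r,n}/\sqrt{K_n}$. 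Using \eqref{DEFMr} I would then write $S_{r,n}=M_{r,n}/a_{r,n}-(-1)^rp_{\alpha}(r)\theta/\alpha$, reducing everything to the asymptotics of $M_{r,n}$.

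Next I would apply the martingale central limit theorem in its self-normalized form, invoking \eqref{LIMIPINMr}. Two conditions need checking. Since $S_{\alpha,\theta}>0$ almost surely, Lemma~\ref{L-KEY} forces $\langle M_{r}\rangle_{n}\to+\infty$ a.s. For the Lindeberg condition I would exploit the increment representation \eqref{DECOMG}: as each $\xi_{i,n+1}\in\{-1,0,1\}$ and $\xi_{n+1}\in\{0,1\}$, the random variable $\zeta_{r,n+1}$ is bounded by a deterministic constant, so $|\Delta M_{r,n+1}|\leq C_r\, a_{r,n+1}$ for some $C_r>0$. Because $(a_{r,n})$ is nondecreasing with $a_{r,n}=O(n^{r-\alpha})$ by \eqref{CVGANr}, while $\sqrt{\langle M_{r}\rangle_{n}}$ is of exact order $n^{r-\alpha/2}$ by \eqref{LIMIPINMr}, one gets $\max_{k<n}|\Delta M_{r,k+1}|/\sqrt{\langle M_{r}\rangle_{n}}=O(n^{-\alpha/2})\to0$ a.s., a strong form of the conditional Lindeberg condition. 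The non-standard CLT of \citet{Tou(91),Tou(94)} then yields $M_{r,n}/\sqrt{\langle M_{r}\rangle_{n}}\overset{\cL}{\longrightarrow}\cN(0,1)$.

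I would finish with a Slutsky argument that both produces the stated variance and, decisively, eliminates the random $\alpha$-diversity $S_{\alpha,\theta}$. Combining \eqref{eq:kalpha}, \eqref{CVGANr} and \eqref{LIMIPINMr} gives
\[
\frac{\langle M_{r}\rangle_{n}}{a_{r,n}^{2}K_n}\underset{n\rightarrow+\infty}{\longrightarrow} p_{\alpha}(r)\frac{(r-\alpha)^{(r)}}{r!}\qquad\text{a.s.},
\]
since both the Gamma factors $\big(\Gamma(\alpha+\theta-r+1)/\Gamma(\theta+1)\big)^{2}$ and $S_{\alpha,\theta}$ cancel in the ratio. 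Writing
\[
\frac{S_{r,n}}{\sqrt{K_n}}=\frac{M_{r,n}}{\sqrt{\langle M_{r}\rangle_{n}}}\cdot\frac{\sqrt{\langle M_{r}\rangle_{n}}}{a_{r,n}\sqrt{K_n}}-\frac{(-1)^rp_{\alpha}(r)\theta}{\alpha\sqrt{K_n}},
\]
the first factor converges in law to $\cN(0,1)$, the second converges a.s. to $\sqrt{p_{\alpha}(r)(r-\alpha)^{(r)}/r!}$, and the last term vanishes a.s. as $K_n\to+\infty$; Slutsky's lemma then delivers \eqref{LCLTr}, with \eqref{LCLT1} as the $r=1$ instance.

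The delicate point I expect to be the main obstacle is precisely this cancellation. A deterministic normalization of $M_{r,n}$ would only yield a conditionally Gaussian (mixed normal) limit whose variance still carries the random factor $S_{\alpha,\theta}$; the reason for dividing by $\sqrt{K_n}$, equivalently for self-normalizing by $\sqrt{\langle M_{r}\rangle_{n}}$, is that the same $S_{\alpha,\theta}$ re-enters through \eqref{eq:kalpha} and cancels exactly, converting the mixed limit into a genuine parameter-free Gaussian. Getting this cancellation to be exact, and therefore applying the CLT in its non-standard self-normalized form rather than with a deterministic scale, is the crux of the argument; the Lindeberg verification above, while routine once the jump bound is in hand, is the only other place where care is required.
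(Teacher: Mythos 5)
Your proposal is correct and follows essentially the same route as the paper: identify the left-hand side of \eqref{LCLTr} with $S_{r,n}/\sqrt{K_n}$, apply a self-normalized martingale CLT to $M_{r,n}/\sqrt{\langle M_r\rangle_n}$ using \eqref{LIMIPINMr}, and conclude by Slutsky after observing that $\langle M_r\rangle_n/(a_{r,n}^2K_n)$ converges almost surely to the deterministic constant $p_\alpha(r)(r-\alpha)^{(r)}/r!$, the random $\alpha$-diversity cancelling exactly. The only (harmless, arguably cleaner) deviation is your Lindeberg verification via the deterministic bound $|\Delta M_{r,k}|\leq C_r a_{r,k}=O(n^{r-\alpha})=o(n^{r-\alpha/2})$, which makes the indicators vanish for large $n$, whereas the paper instead computes and bounds the conditional fourth moments $\E[\zeta_{r,n+1}^4\,|\,\mathcal{F}_{r,n}]$ to establish \eqref{ASCVGPQVr5}.
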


\begin{proof}
We just saw in lemma \ref{L-KEY} that the predictable quadratic variation $\langle M_{r} \rangle_n$ satisfies
\begin{equation}
\label{ASCVGPQVr}
\lim_{n\rightarrow+\infty}\frac{\langle M_{r}\rangle_{n}}{n^{2r-\alpha}}=p_{\alpha}(r)\frac{(r-\alpha)^{(r)}}{r!}\left(\frac{\Gamma(\alpha+\theta-r+1)}{\Gamma(\theta+1)}\right)^{2}S_{\alpha,\theta}\qquad\text{a.s.}
\end{equation}
Moreover, we are going to prove that Lindeberg’s condition is satisfied, that is for all $\varepsilon >0$,
\begin{equation}
\label{ASCVGPQVr4}
\frac{1}{n^{2r-\alpha}}\sum_{k=1}^{n}\E\big[\Delta M_{r,k}^2 \rI_{\{|\Delta M_{r, k}|>\varepsilon \sqrt{n^{2r-\alpha}} \}} \,|\,\mathcal{F}_{r,k-1} \big] \overset{\displaystyle \dP}{\underset{n\to\infty}{\longrightarrow}} 0.
\end{equation}
We immediately have for all $\varepsilon >0$,
$$
\sum_{k=1}^{n}\E\big[\Delta M_{r,k}^2 \rI_{\{|\Delta M_{r, k}|>\varepsilon \sqrt{n^{2r-\alpha}} \}} \,|\,\mathcal{F}_{r,k-1}\big]
\leq 
\frac{1}{\varepsilon^2 n^{2r-\alpha}}\sum_{k=1}^{n}\E\big[\Delta M_{r, k}^4 \,|\,\mathcal{F}_{r,k-1}\big].
$$
Hence, in order to prove \eqref{ASCVGPQVr4}, 
it is only necessary to show that
\begin{equation}
\label{ASCVGPQVr5}
 \frac{1}{n^{4r-2\alpha}}\sum_{k=1}^{n}
\E\big[\Delta M_{r, k}^4 \,|\,\mathcal{F}_{r,k-1} \big] \overset{\displaystyle \dP}{\underset{n\to\infty}{\longrightarrow}} 0.
\end{equation}
Using the same approach as in \eqref{CEzeta2}, we obtain that for all $n \geq 1$,
\begin{eqnarray*}
\E[\zeta_{r,n+1}^4\,|\,\mathcal{F}_{r,n}] &=& \sum_{i=1}^{r}b_{r,i}^4(p_{i,n}+q_{i,n}) -4\sum_{i=1}^{r-1}b_{r,i}^3b_{r,i+1}q_{i,n} +6\sum_{i=1}^{r-1}b_{r,i}^{2}b^{2}_{r,i+1}q_{i,n} \\
&-4&
\sum_{i=1}^{r-1}b_{r,i}b_{r,i+1}^3q_{i,n} +
p_{n}\sum_{j=1}^{4}\binom{4}{j}(-1)^{jr}p_{\alpha}^{j}(r)b_{r,1}^{4-j}
\qquad \text{a.s.} 
\end{eqnarray*}
However, we already saw that $b_{r,r}=1$ and for all $1\leq i \leq r-2$, $b_{r,i+1}= -e_{r,i} b_{r,i}$ where
\begin{equation*}
 e_{r,i}= \left(\frac{r-i}{i-\alpha}\right).
\end{equation*}
Consequently,
\begin{eqnarray*}
\E[\zeta_{r,n+1}^4\,|\,\mathcal{F}_{r,n}] &=& \sum_{i=1}^{r}b_{r,i}^4 p_{i,n}+ \sum_{i=1}^{r}b_{r,i}^4 q_{i,n} \Big(1 + 4e_{r,i} + 6e_{r,i}^2 + 4e_{r,i}^3\Big)\\
&+& p_{n}\sum_{j=1}^{4}\binom{4}{j}(-1)^{jr}p_{\alpha}^{j}(r)b_{r,1}^{4-j} 
\qquad \text{a.s.} 
\end{eqnarray*}
which leads to
\begin{equation}
\label{CEzeta4}
\E[\zeta_{r,n+1}^4\,|\,\mathcal{F}_{r,n}] \leq \sum_{i=1}^{r}b_{r,i}^4 p_{i,n} + \left(\frac{r-\alpha}{1-\alpha}\right)^{\!\!4} \sum_{i=1}^{r}b_{r,i}^4 q_{i,n}
+ p_n p_{\alpha}^4(r) \left(\frac{r-\alpha}{\alpha}\right)^{\!\!4}  
\quad \text{a.s.} 
\end{equation}
Therefore, we deduce from \eqref{DECOMG} and \eqref{CEzeta4} together with
the almost sure convergences \eqref{ASCVGPN}, \eqref{ASCVGQIN} and \eqref{CVGANr} that
\begin{equation}
\label{CEMG4}
\sum_{k=1}^{n}\E\big[\Delta M_{r, k}^4 \,|\,\mathcal{F}_{r,k-1}\big]=O\left( \sum_{k=1}^n k^{4r-1-3\alpha} \right)=O(n^{4r-3\alpha}) \qquad\text{a.s.}
\end{equation}
which clearly implies \eqref{ASCVGPQVr5}.
Hence, the two conditions of \citep[Corollary 3.1]{Hal(80)}
are satisfied, which allows us to conclude that for all $r \geq 1$,
\begin{equation}
\label{ASCVGPQVr6}
\frac{M_{r,n}}{\sqrt{\langle M_{r} \rangle_n}} \underset{n\rightarrow+\infty}{\overset{\cL}{\longrightarrow}} \cN(0, 1),
\end{equation}
Furthermore, we find from \eqref{DEFMr}, \eqref{DEFSNr} and \eqref{ASCVGPQVr6} that for all $r \geq 1$,
\begin{equation*}
\frac{a_{r,n}}{\sqrt{\langle M_{r} \rangle_n}} \left(S_{r,n}+\frac{(-1)^{r} p_{\alpha}(r) \theta}{\alpha}\right)\underset{n\rightarrow+\infty}{\overset{\cL}{\longrightarrow}} \cN(0, 1),
\end{equation*}
\begin{equation*}
\frac{a_{r,n}}{\sqrt{\langle M_{r} \rangle_n}} \left( \sum_{i=1}^{r}b_{r,i}K_{i,n} +(-1)^r p_{\alpha}(r) K_{n}  +\frac{(-1)^{r} p_{\alpha}(r) \theta}{\alpha}\right)\underset{n\rightarrow+\infty}{\overset{\cL}{\longrightarrow}} \cN(0, 1),
\end{equation*}
\begin{equation}
\label{CLTPrPR1}
\frac{a_{r,n} K_n}{\sqrt{\langle M_{r} \rangle_n}} \left( \sum_{i=1}^{r}b_{r,i}P_{i,n} +(-1)^r p_{\alpha}(r)  +\frac{(-1)^{r} p_{\alpha}(r) \theta}{\alpha K_n}\right)\underset{n\rightarrow+\infty}{\overset{\cL}{\longrightarrow}} \cN(0, 1).
\end{equation}
However, we clearly have from \eqref{eq:kalpha}, \eqref{CVGANr} and \eqref{ASCVGPQVr} that
\begin{equation}
\label{CLTPrPR2}
\lim_{n\rightarrow+\infty}\frac{\langle M_{r}\rangle_{n}}{a^{2}_{r,n}K_{n}}=p_{\alpha}(r)\frac{(r-\alpha)^{(r)}}{r!}\qquad\text{a.s.}
\end{equation}
and
\begin{equation}
\label{CLTPrPR3}
\lim_{n\rightarrow+\infty}\frac{(-1)^{r} p_{\alpha}(r) \theta}{\alpha K_n}=0 \qquad\text{a.s.}
\end{equation}
Finally, we find from \eqref{CLTPrPR1}, \eqref{CLTPrPR2}, \eqref{CLTPrPR3} and Slutsky’s lemma that
\begin{equation}
\label{CLTPrPR4}
\sqrt{K_{n}} \left( \sum_{i=1}^{r}b_{r,i}P_{i,n} +(-1)^r p_{\alpha}(r) \right)\underset{n\rightarrow+\infty}{\overset{\cL}{\longrightarrow}} \cN\left(0, p_{\alpha}(r)\frac{(r-\alpha)^{(r)}}{r!}\right),
\end{equation}
which achieves the proof of Lemma \ref{L-ScalarCLT}.
\end{proof}


\subsection{The multi-dimensional martingale construction}
\label{Sub-MDMC}


Consider the $d$-dimensional random vector $\cM_{n}$ defined as 
\begin{equation}
\label{DEFVMr}
\cM_{n}= \begin{pmatrix}
M_{1,n} \\
M_{2,n} \\
\vdots \\
M_{d,n}
\end{pmatrix}
\end{equation}
where each component $M_{r,n}$ has been previously defined in \eqref{DEFMr}.
We already in Lemma \ref{L-KEY} that for all $r\geq 1$, $(M_{r,n})$ is a locally square integrable martingale.
It implies that the sequence $(\cM_{n})$ is a locally square integrable $d$-dimensional martingale. 
Let $(V_n)$ be the sequence of deterministic square matrices of order $d$ given by

\begin{equation}
\label{DEFmatrixVN}
V_{n}=n^{\alpha/2} \text{diag}\left(\frac{1}{n}, \frac{1}{n^2}, \ldots, \frac{1}{n^d}\right).
\end{equation}

\begin{lem}
\label{L-KEYMATRIX}
The predictable quadratic variation $\langle \cM\rangle_{n}$ of the martingale $(\cM_{n})$ satisfies
\begin{equation}
\label{LIMmatrixMd}
\lim_{n\rightarrow+\infty}V_{n}\langle \cM\rangle_{n}V_{n}= S_{\alpha,\theta} \Sigma_d \qquad\text{a.s.}
\end{equation}
where $\Sigma_d$ stands for the square matrix of order $d$ whose $(i,j)$-th element is given by
\begin{equation}
\label{Sigmaij}
\Sigma_{i,j}=\frac{(-1)^{i+j} (i+j)!}{i!j!}p_\alpha(i+j)\frac{\Gamma(\alpha+\theta-i+1) \Gamma(\alpha+\theta-j+1)}{\Gamma^2(\theta+1)}.
\end{equation}
\end{lem}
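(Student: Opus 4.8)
The plan is to read the diagonal of $\langle\cM\rangle_n$ directly off Lemma \ref{L-KEY} and to compute the off-diagonal entries from the predictable covariation of the scalar martingales, all relative to the common filtration $(\mathcal{F}_{d,n})$. Fix $1\le i\le j\le d$ (the case $i>j$ follows by symmetry). Each $M_{r,n}$ with $r\le d$ is $(\mathcal{F}_{d,n})$-adapted, and its increment depends only on $\xi_{1,n+1},\dots,\xi_{r,n+1},\xi_{n+1}$, whose conditional law given $\mathcal{F}_{d,n}$ is governed by the $\mathcal{F}_{r,n}$-measurable probabilities $p_{\ell,n},q_{\ell,n}$ of \eqref{DEFIPr}; hence $(\cM_n)$ is a locally square integrable $(\mathcal{F}_{d,n})$-martingale whose predictable quadratic variation has $(i,j)$-entry $\langle M_i,M_j\rangle_n=\sum_{k=1}^{n-1}\E[\Delta M_{i,k+1}\Delta M_{j,k+1}\,|\,\mathcal{F}_{d,k}]$. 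By the centred representation \eqref{DECOMG}, each summand equals $a_{i,k+1}a_{j,k+1}\,\mathrm{Cov}(\zeta_{i,k+1},\zeta_{j,k+1}\,|\,\mathcal{F}_{d,k})$, so the whole computation reduces to the large-$k$ behaviour of this conditional covariance.

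The next step is to extract its leading order. The product of conditional means is negligible: by \eqref{ASCVGPCEzetar1} one has $\E[\zeta_{r,k+1}\,|\,\mathcal{F}_{d,k}]=o(k^{\alpha-1})$, so the centring costs $o(k^{2\alpha-2})$, of smaller order than the $O(k^{\alpha-1})$ size of the mixed second moment. It therefore suffices to evaluate $\lim_{k\rightarrow+\infty}k^{1-\alpha}\E[\zeta_{i,k+1}\zeta_{j,k+1}\,|\,\mathcal{F}_{d,k}]$. Expanding $\zeta_{r,k+1}$ and inserting the pairwise conditional covariances of the increments recorded before \eqref{CEzeta2} — namely $\E[\xi_{\ell,k+1}^2\,|\,\mathcal{F}_{d,k}]=p_{\ell,k}+q_{\ell,k}$, $\E[\xi_{\ell,k+1}\xi_{\ell+1,k+1}\,|\,\mathcal{F}_{d,k}]=-q_{\ell,k}$, $\E[\xi_{k+1}\xi_{1,k+1}\,|\,\mathcal{F}_{d,k}]=\E[\xi_{k+1}^2\,|\,\mathcal{F}_{d,k}]=p_k$, and all other pairs vanishing — together with the a.s.\ limits \eqref{ASCVGPN} and \eqref{ASCVGQIN}, turns this into a finite sum equal to $S_{\alpha,\theta}$ times an explicit bilinear combination of the coefficients $b_{i,\ell},b_{j,\ell}$ of \eqref{DEFbr}.

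The crux is the combinatorial identity asserting that this combination equals $(i+j-\alpha)(-1)^{i+j}\tfrac{(i+j)!}{i!\,j!}\,p_\alpha(i+j)$. After reducing the rising factorials to a common hypergeometric form and clearing the elementary prefactors, it is exactly the terminating Chu--Vandermonde evaluation
\begin{equation*}
\sum_{m=0}^{i}\frac{(-i)^{(m)}(-j)^{(m)}}{(1-\alpha)^{(m)}\,m!}=\frac{(1+j-\alpha)^{(i)}}{(1-\alpha)^{(i)}},
\end{equation*}
which is \eqref{ChuVan} with $n=i$, $b=-j$, $a=1-\alpha$. The pleasant point to check is that the boundary term coming from the $\xi_{k+1}$ contribution is precisely the $m=0$ term of this sum, not an exceptional correction, so no separate bookkeeping is needed, mirroring the collapse of \eqref{ASCVGPCEzetar2} in the diagonal case. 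With the identity in hand I would feed in $a_{r,n}\sim n^{r-\alpha}\Gamma(\alpha+\theta-r+1)/\Gamma(\theta+1)$ from \eqref{CVGANr}: the $k$-th summand is asymptotic to $k^{i+j-\alpha-1}$ times $\Gamma(\alpha+\theta-i+1)\Gamma(\alpha+\theta-j+1)/\Gamma^2(\theta+1)$ times the above limit, and Toeplitz's lemma (whose factor $1/(i+j-\alpha)$ combines with the rising-factorial structure exactly as in Lemma \ref{L-KEY}) yields $\lim_{n\rightarrow+\infty} n^{\alpha-i-j}\langle M_i,M_j\rangle_n=S_{\alpha,\theta}\,\Sigma_{i,j}$ with $\Sigma_{i,j}$ as in \eqref{Sigmaij}.

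It remains only to assemble the matrix. Since $V_n$ of \eqref{DEFmatrixVN} is diagonal with $(r,r)$-entry $n^{\alpha/2-r}$, the $(i,j)$-entry of $V_n\langle\cM\rangle_nV_n$ is exactly $n^{\alpha-i-j}\langle M_i,M_j\rangle_n$, whose limit has just been identified for $i\ne j$; the diagonal entries come from Lemma \ref{L-KEY} and match $\Sigma_{i,i}$ through the rising-factorial identity $(1-\alpha)^{(i-1)}(i-\alpha)^{(i)}=(1-\alpha)^{(2i-1)}$. Collecting the entries gives \eqref{LIMmatrixMd}. I expect the main obstacle to be the off-diagonal combinatorial identity: unlike the diagonal case it is genuinely bilinear in the $b_{i,\ell}$ and $b_{j,\ell}$ and carries two distinct rising-factorial factors, so the work lies in recognising the resulting sum as a single Chu--Vandermonde evaluation and confirming that the $\xi_{k+1}$ term slots in as its $m=0$ summand; the remaining steps are order counting and an application of Toeplitz's lemma.
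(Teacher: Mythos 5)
Your plan follows the paper's proof essentially line for line: the same decomposition of $\E[\Delta M_{i,n+1}\Delta M_{j,n+1}\,|\,\mathcal{F}_{d,n}]$ into $a_{i,n+1}a_{j,n+1}$ times the conditional covariance of $\zeta_{i,n+1}$ and $\zeta_{j,n+1}$, the same observation that the product of conditional means is of lower order, the same reduction of $\lim_n n^{1-\alpha}\E[\zeta_{i,n+1}\zeta_{j,n+1}\,|\,\mathcal{F}_{d,n}]$ to a bilinear sum in the $b_{i,t}b_{j,t}$ evaluated by Chu--Vandermonde, and the same Toeplitz step; your remark that the $\xi_{k+1}$ contribution slots in as the $t=0$ term of the Chu--Vandermonde sum is indeed exactly how the cancellation works in the paper. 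The one concrete correction: writing $b_{i,t}b_{j,t}p_\alpha(t)$ in Gamma-function form produces a summand proportional to $(-i)^{(t)}(-j)^{(t)}/\big((-\alpha)^{(t)}t!\big)$, so the relevant instance of \eqref{ChuVan} has lower parameter $a=-\alpha$, not $1-\alpha$, and evaluates (after removing the $t=0$ term) to $(j-\alpha)^{(i)}/(-\alpha)^{(i)}-1$ rather than your $(1+j-\alpha)^{(i)}/(1-\alpha)^{(i)}$; only the former combines with $p_\alpha(i)p_\alpha(j)$ and the factor $(i+j-\alpha)$ to yield $\tfrac{(i+j)!}{i!j!}p_\alpha(i+j)$ and hence $\Sigma_{i,j}$. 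With that parameter fixed, the rest of your outline --- the order counting, $a_{r,n}\sim n^{r-\alpha}\Gamma(\alpha+\theta-r+1)/\Gamma(\theta+1)$, Toeplitz's lemma, and the diagonal consistency check via $(1-\alpha)^{(i-1)}(i-\alpha)^{(i)}=(1-\alpha)^{(2i-1)}$ --- is exactly the paper's argument.
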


\begin{proof}
We shall proceed as in the proof of Lemma \ref{L-KEY}. We deduce from the initial decomposition \eqref{DECOMG} that 
\begin{equation}
\label{DEFmatrixIPMd}
\langle \cM\rangle_{n}=\sum_{k=1}^{n-1}\E[\Delta \cM_{k+1}\Delta \cM_{k+1}^{T}\,|\,\mathcal{F}_{d,k}]
\end{equation}
where
\begin{displaymath}
\Delta \cM_{n+1}=
\begin{pmatrix}
\Delta M_{1,n+1}\\
\Delta M_{2,n+1}\\
\vdots\\
\Delta M_{d,n+1}
\end{pmatrix}
=
\begin{pmatrix}
a_{1,n+1}(S_{1,n+1}-\E[S_{1,n+1}\,|\,\mathcal{F}_{d,n}])\\
a_{2,n+1}(S_{2,n+1}-\E[S_{2,n+1}\,|\,\mathcal{F}_{d,n}])\\
\vdots\\
a_{d,n+1}(S_{d,n+1}-\E[S_{d,n+1}\,|\,\mathcal{F}_{d,n}])\\
\end{pmatrix}.
\end{displaymath}
By the same calculation as in \eqref{eq:CVMr2}, we have for all $1\leq i,j \leq d$,
\begin{align}\label{eq:start}
& \E[\Delta M_{i,n+1}\Delta M_{j,n+1}\,|\,\mathcal{F}_{d,n}]\\
&\notag\quad=a_{i,n+1}a_{j,n+1}\big(\E[S_{i,n+1}S_{j,n+1}\,|\,\mathcal{F}_{d,n}]-\E[S_{i,n+1}\,|\,\mathcal{F}_{d,n}]\E[S_{j,n+1}\,|\,\mathcal{F}_{d,n}]\big) 
\\
&\notag\quad=a_{i,n+1}a_{j,n+1}\big(c_{(i,j),n}-b_{(i,j),n}\big)
\end{align}
where $b_{(i,j),n}=\E[\zeta_{i,n+1} \,|\,\mathcal{F}_{d,n}] \E[\zeta_{j,n+1} \,|\,\mathcal{F}_{d,n}]$ and
$c_{(i,j),n}=\E[\zeta_{i,n+1} \zeta_{j,n+1} \,|\,\mathcal{F}_{d,n}]$.
On the one hand, it follows from \eqref{CEzeta1} that for all $1\leq i,j \leq d$,
\begin{align}\label{equation_partb}
& b_{(i,j),n}= \sum_{t=1}^{i}\sum_{s=1}^{j}b_{i,t}b_{j,s}(p_{t,n}-q_{t,n})(p_{s,n}-q_{s,n})
+(-1)^{i+j}p_{\alpha}(i)p_{\alpha}(j)p_{n}^{2}\\
&\notag+(-1)^{j}p_{\alpha}(j)p_{n}\sum_{t=1}^{i}b_{i,t}(p_{t,n}-q_{t,n})+(-1)^{i}p_{\alpha}(i)p_{n}\sum_{s=1}^{j}b_{j,s}(p_{s,n}-q_{s,n}) \qquad \text{a.s.}
\end{align}
On the other hand, we deduce from the same calculation as in \eqref{CEzeta2} that for all $1\leq i,j \leq d$ such that $i \leq j$,
\begin{align}\label{equation_partc}
 c_{(i,j),n}&=\sum_{t=1}^{i}b_{i,t}b_{j,t} (p_{t,n}+q_{t,n})
- \sum_{t=1}^{i-1}(b_{i,t}b_{j,t+1}+b_{j,t}b_{i,t+1})q_{t,n}  -b_{j,i+1}q_{i,n}\\
&\notag - \Bigl(\frac{i+j-\alpha}{\alpha}\Bigr) (-1)^{i+j} p_{\alpha}(i)p_{\alpha}(j)p_n \qquad \text{a.s.}
\end{align}
Convergence \eqref{ASCVGPCEzetar1} of course ensures that for all $1\leq i,j \leq d$,
\begin{equation}
\label{cvg-partb}
\lim_{n \rightarrow \infty} n^{1-\alpha} b_{(i,j),n} = 0 \hspace{1cm} \text{a.s.}
\end{equation}
Moreover, we obtain from \eqref{ASCVGPN} and \eqref{ASCVGQIN} that for all $1\leq i,j \leq d$ such that $i \leq j$,
\begin{align}\label{cvg-partc}
\lim_{n \rightarrow \infty} n^{1-\alpha} c_{(i,j),n}
&= \Big( \sum_{t=1}^{i}b_{i,t} b_{j,t} (2t - \alpha) p_{\alpha}(t) 
+  \sum_{t=1}^{i-1}b_{i,t} b_{j,t} (i+j -2t) p_{\alpha}(t) \Big)S_{\alpha, \theta} \notag \\
&\notag -b_{j,i+1}(i-\alpha)p_{\alpha}(i)S_{\alpha,\theta} - (i+j-\alpha) (-1)^{i+j} p_{\alpha}(i)p_{\alpha}(j) S_{\alpha, \theta} 
\qquad \text{a.s.} \notag \\  
&\notag=(i+j- \alpha) \Big( \sum_{t=1}^{i}b_{i,t} b_{j,t}  p_{\alpha}(t) - (-1)^{i+j} p_{\alpha}(i)p_{\alpha}(j) \Big)S_{\alpha, \theta} \\ 
&\notag - \Big( b_{j,i+1}(i-\alpha)+b_{j,i}(j-i) \Big) p_\alpha(i) S_{\alpha, \theta}   \qquad \text{a.s.}\notag\\
&=(i+j- \alpha) \Big( \sum_{t=1}^{i}b_{i,t} b_{j,t}  p_{\alpha}(t) - (-1)^{i+j} p_{\alpha}(i)p_{\alpha}(j) \Big)S_{\alpha, \theta}
\end{align}
Furthermore, the sum in \eqref{cvg-partc} reduces to
\begin{align}\label{final0}
\notag\sum_{t=1}^{i}b_{i,t} b_{j,t}  p_{\alpha}(t)&=(-1)^{i+j}\alpha\sum_{t=1}^{i}\frac{(t-\alpha)^{(i-t)}(t-\alpha)^{(j-t)}(1-\alpha)^{(t-1)}}{t!(i-t)!(j-t)!}\\
&\notag=(-1)^{i+j}\frac{\Gamma(i-\alpha)\Gamma(j-\alpha)}{\Gamma(1-\alpha)}\frac{\alpha}{i!j!}\sum_{t=1}^{i}\frac{i!(j-t+1)^{(t)}}{t!(i-t)!\Gamma(t-\alpha)}\\
&=(-1)^{i+j}\frac{\Gamma(i-\alpha)\Gamma(j-\alpha)}{\Gamma(1-\alpha)\Gamma(-\alpha)}\frac{\alpha}{i!j!}\sum_{t=1}^{i}\frac{(-i)^{(t)}(-j)^{(t)}}{(-\alpha)^{(t)}t!}.
\end{align}
By applying one again Chu-Vandermonde identity \eqref{ChuVan} to \eqref{final0}, we obtain that
\begin{align}\label{final1}
\notag\sum_{t=1}^{i}b_{i,t} b_{j,t}  p_{\alpha}(t)&=(-1)^{i+j}\frac{\Gamma(i-\alpha)\Gamma(j-\alpha)}{\Gamma(1-\alpha)\Gamma(-\alpha)}\frac{\alpha}{i!j!}\left(-1+\frac{(j-\alpha)^{(i)}}{(-\alpha)^{(i)}}\right)\\
&\notag\quad=(-1)^{i+j+1}\frac{\Gamma(i-\alpha)\Gamma(j-\alpha)}{\Gamma(-\alpha)\Gamma(-\alpha)}\frac{1}{i!j!}\left(-1+\frac{(j-\alpha)^{(i)}}{(-\alpha)^{(i)}}\right)\\
&\quad=(-1)^{i+j+1}p_{\alpha}(i)p_{\alpha}(j)\left(\frac{(j-\alpha)^{(i)}}{(-\alpha)^{(i)}}-1\right).
\end{align}
Therefore, we deduce from \eqref{cvg-partc} and \eqref{final1} that for all $1\leq i,j \leq d$ such that $i \leq j$,
\begin{eqnarray}
\lim_{n \rightarrow \infty} n^{1-\alpha} c_{(i,j),n}
&=& (-1)^{i+j+1} (i+j- \alpha)\frac{(j-\alpha)^{(i)}}{(-\alpha)^{(i)}}p_{\alpha}(i)p_{\alpha}(j)S_{\alpha, \theta} \notag\\
&=&\frac{(-1)^{i+j} (i+j)!}{i!j!}(i+j- \alpha)p_\alpha(i+j)S_{\alpha,\theta}
\hspace{1cm} \text{a.s.}
\label{final2}
\end{eqnarray}
One can observe that \eqref{final2} coincides with \eqref{ASCVGPCEzetar2} in the special case $i=j=r$.
Finally, it follows from \eqref{CVGANr}, \eqref{eq:start} and \eqref{final2} that for all $1\leq i,j \leq d$ such that $i \leq j$,
\begin{equation}
\lim_{n\rightarrow+\infty} \frac{\E[\Delta M_{i,n+1}\Delta M_{j,n+1}\,|\,\mathcal{F}_{d,n}]}{n^{i+j-\alpha -1}}=
(i+j-\alpha) \Sigma_{i,j} S_{\alpha, \theta}  \hspace{1cm} \text{a.s.}
\label{final3}
\end{equation}
where the covariance term $\Sigma_{i,j}$ has been previously defined in \eqref{Sigmaij}. Convergence \eqref{final3} leads, via Toeplitz lemma, to the conclusion that for all $1\leq i,j \leq d$ such that $i \leq j$,
\begin{equation*}
\lim_{n\rightarrow+\infty}\frac{1}{n^{i+j-\alpha}} \sum_{k=1}^n
\E[\Delta M_{i,k+1}\Delta M_{j,k+1}\,|\,\mathcal{F}_{d,k}]
=\Sigma_{i,j} S_{\alpha, \theta}  \hspace{1cm} \text{a.s.}
\end{equation*}
which completes the proof of Lemma \ref{L-KEYMATRIX}.
\end{proof}


\subsection{A multi-dimensional central limit theorem} 
\label{Sub-MCLT}


\begin{lem}
\label{L-MATRIXCLT}
Assume that $\alpha\in]0,1[$ and  $\alpha+\theta>0$. Then, we have 
the multi-dimensional asymptotic normality
\begin{equation}
\label{MCLT}
\sqrt{K_{n}} 
\begin{pmatrix}
P_{1,n} -p_{\alpha}(1) \vspace{1ex}\\
P_{2,n} +b_{2,1} P_{1,n} + p_{\alpha}(2) \\
\vdots\\
\displaystyle{P_{d,n}  + \sum_{i=1}^{d-1}b_{d,i}P_{i,n} +(-1)^d p_{\alpha}(d)}
\end{pmatrix}
\underset{n\rightarrow+\infty}{\overset{\cL}{\longrightarrow}} \cN_d(0, \Lambda_d)
\end{equation}
where $\Lambda_d$ denotes the square matrix of order $d$ whose $(i,j)$-th element is given by
\begin{equation}
\label{Lambdaij}
\Lambda_{i,j}=\frac{(-1)^{i+j} (i+j)!}{i!j!}p_\alpha(i+j),
\end{equation}
and $ \cN_d(0, \Lambda_d)$ stands for a centered $d$-dimensional Gaussian random variable with covariance $\Lambda_d$.
\end{lem}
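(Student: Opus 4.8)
The plan is to recognize the random vector in \eqref{MCLT} as a self-normalized version of the martingale $\cM_n$ and to deduce its asymptotic normality from a non-standard multi-dimensional martingale central limit theorem, exactly in the spirit of the scalar Lemma \ref{L-ScalarCLT}. Writing $W_n=\mathrm{diag}(a_{1,n},\ldots,a_{d,n})$, definitions \eqref{DEFMr}, \eqref{DEFSNr} and \eqref{DEFPrn} give, for each $r$,
\begin{equation*}
\frac{M_{r,n}}{a_{r,n}K_n}=\sum_{i=1}^{r}b_{r,i}P_{i,n}+(-1)^r p_{\alpha}(r)+\frac{(-1)^{r}p_{\alpha}(r)\theta}{\alpha K_n},
\end{equation*}
so that, since $b_{r,r}=1$, the $r$-th coordinate of the vector in \eqref{MCLT} coincides with $M_{r,n}/(a_{r,n}K_n)$ up to the term $(-1)^r p_\alpha(r)\theta/(\alpha K_n)$, which tends to $0$ almost surely by \eqref{eq:kalpha}. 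Hence it suffices to establish the asymptotic normality of $K_n^{-1/2}W_n^{-1}\cM_n$ and then to conclude by Slutsky's lemma.

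First I would verify the conditional Lindeberg condition for the normalized increments $V_n\Delta\cM_{n+1}$. Since $V_n\langle\cM\rangle_n V_n$ already converges almost surely by Lemma \ref{L-KEYMATRIX}, it only remains to control fourth moments: the diagonal bound \eqref{CEMG4} from Lemma \ref{L-ScalarCLT}, combined with the Cauchy--Schwarz inequality, bounds every mixed term $\E[\Delta M_{i,k}^2\Delta M_{j,k}^2\mid\mathcal{F}_{d,k-1}]$, so the multi-dimensional Lindeberg condition holds. With the quadratic-variation limit of Lemma \ref{L-KEYMATRIX} and this Lindeberg condition in hand, I would invoke the non-standard central limit theorem for multi-dimensional martingales \citep{Tou(91),Tou(94),Ber(21)}. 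Because the limiting conditional variance $S_{\alpha,\theta}\Sigma_d$ is random, the theorem yields the stable (mixing) convergence
\begin{equation*}
V_n\cM_n\underset{n\rightarrow+\infty}{\overset{\cL}{\longrightarrow}}\sqrt{S_{\alpha,\theta}}\,Z,\qquad Z\sim\cN_d(0,\Sigma_d),
\end{equation*}
with $Z$ independent of $S_{\alpha,\theta}$.

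It then remains to pass from the deterministic normalization $V_n$ to the self-normalization $K_n^{-1/2}W_n^{-1}$. Setting $D=\mathrm{diag}\big(\Gamma(\theta+1)/\Gamma(\alpha+\theta-r+1)\big)_{1\leq r\leq d}$, the Gamma asymptotics \eqref{CVGANr} give $W_n^{-1}=n^{\alpha/2}D\,V_n(1+o(1))$ entrywise, whence
\begin{equation*}
\frac{1}{\sqrt{K_n}}W_n^{-1}\cM_n=\sqrt{\frac{n^{\alpha}}{K_n}}\,D\,(V_n\cM_n)(1+o(1)).
\end{equation*}
Here is the decisive point: by \eqref{eq:kalpha} the prefactor $\sqrt{n^\alpha/K_n}$ converges almost surely to $1/\sqrt{S_{\alpha,\theta}}$, and the \emph{same} random variable $S_{\alpha,\theta}$ governs the limiting variance above. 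Stable convergence lets one take the joint limit of $(\sqrt{n^\alpha/K_n},V_n\cM_n)$, so the two occurrences of $\sqrt{S_{\alpha,\theta}}$ cancel and
\begin{equation*}
\frac{1}{\sqrt{K_n}}W_n^{-1}\cM_n\underset{n\rightarrow+\infty}{\overset{\cL}{\longrightarrow}}D\,Z\sim\cN_d(0,D\Sigma_d D).
\end{equation*}
A short computation using \eqref{Sigmaij} and \eqref{Lambdaij} shows that the Gamma factors telescope to give $D\Sigma_d D=\Lambda_d$, and combining this with the Slutsky reduction of the first paragraph yields \eqref{MCLT}.

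I expect the main obstacle to be precisely this cancellation of the $\alpha$-diversity $S_{\alpha,\theta}$. Mere convergence in distribution of $V_n\cM_n$ to a mixed Gaussian would not suffice, since one must multiply it by the random scalar $\sqrt{n^\alpha/K_n}$ carrying the same limit $1/\sqrt{S_{\alpha,\theta}}$; the argument therefore hinges on the stable form of the non-standard martingale CLT. Equivalently, one may work from the self-normalized statement $\langle\cM\rangle_n^{-1/2}\cM_n\overset{\cL}{\to}\cN_d(0,I_d)$ and treat $K_n^{-1/2}W_n^{-1}\langle\cM\rangle_n^{1/2}$ as a factor whose ``square'' $K_n^{-1}W_n^{-1}\langle\cM\rangle_n W_n^{-1}$ converges almost surely to the deterministic matrix $\Lambda_d$, the rotational ambiguity of the matrix square root being harmless because $\cN_d(0,I_d)$ is orthogonally invariant. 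By contrast, verifying the multi-dimensional Lindeberg condition and the telescoping identity $D\Sigma_d D=\Lambda_d$ is comparatively routine.
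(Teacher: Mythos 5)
Your proposal is correct and follows essentially the same route as the paper: Lemma \ref{L-KEYMATRIX} for the quadratic variation, a fourth-moment verification of the Lindeberg condition via \eqref{CEMG4}, the Touati multi-dimensional martingale CLT giving $V_n\cM_n\to\sqrt{S_{\alpha,\theta}}\,\cN_d(0,\Sigma_d)$, and then the cancellation of $S_{\alpha,\theta}$ through the almost sure limit $K_n/n^\alpha\to S_{\alpha,\theta}$ together with the telescoping of the Gamma factors turning $\Sigma_d$ into $\Lambda_d$. Your explicit remark that plain convergence in distribution would not suffice and that the stable form of the martingale CLT is what licenses the joint limit with $\sqrt{n^\alpha/K_n}$ is a point the paper passes over by simply invoking Slutsky's lemma, and it is well taken.
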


\begin{proof}
The proof of Lemma \ref{L-MATRIXCLT} relies on a non-standard central limit theorem for multi-dimensional martingales given by \citep[Theorem 1]{Tou(91)}. We already saw in Lemma \ref{L-KEYMATRIX} that the predictable quadratic variation
$\langle \cM\rangle_{n}$ satisfies
\begin{equation}
\label{PRCLTMATRIX1}
\lim_{n\rightarrow+\infty}V_{n}\langle \cM\rangle_{n}V_{n}= S_{\alpha,\theta} \Sigma_d \qquad\text{a.s.}
\end{equation}
where the sequence $(V_n)$ is defined in \eqref{DEFmatrixVN}. It is clear that $||V_n||$ decreases to $0$ as $n$ goes to infinity. Hence, it only remains to prove that Lindeberg’s condition is satisfied, that is for all $\varepsilon >0$, 
\begin{equation*}
 \sum_{k=1}^n \E\bigl[\|V_n \Delta \cM_k \|^2 \rI_{\{\|V_n\Delta \cM_k \|>\veps\}}\bigl|\cF_{d,k-1}\bigr] \overset{\displaystyle \dP}{\underset{n\to\infty}{\longrightarrow}}   0.
\end{equation*}
As in the proof of Lemma \ref{L-ScalarCLT}, we have for all $\varepsilon >0$,
$$
\sum_{k=1}^n \E\bigl[\|V_n \Delta \cM_k \|^2 \rI_{\{\|V_n\Delta \cM_k \|>\veps\}}\bigl|\cF_{d,k-1}\bigr]
\leq 
\frac{1}{\varepsilon^2}
\sum_{k=1}^n \E\bigl[\|V_n \Delta \cM_k \|^4 \bigl|\cF_{d,k-1}\bigr].
$$
Consequently, it is only necessary to prove that
\begin{equation}
\label{PRCLTMATRIX2}
\sum_{k=1}^{n}
\E\bigl[\|V_n \Delta \cM_k \|^4 \bigl|\cF_{d,k-1}\bigr] \overset{\displaystyle \dP}{\underset{n\to\infty}{\longrightarrow}} 0.
\end{equation}
We have from \eqref{DEFmatrixVN} that for all $1 \leq k \leq n$,
\begin{equation*}
\|V_n \Delta \cM_k \|^4 =n^{2\alpha} \left(\sum_{i=1}^d \frac{\Delta M_{i,k}^2}{n^{2i}}\right)^2
\leq d n^{2\alpha} \sum_{i=1}^d \frac{\Delta M_{i,k}^4}{n^{4i}},
\end{equation*}
which implies that
\begin{equation}
\label{PRCLTMATRIX3}
\sum_{k=1}^{n}
\E\bigl[\|V_n \Delta \cM_k \|^4 \bigl|\cF_{d,k-1}\bigr] \leq
d n^{2\alpha} \sum_{i=1}^d \frac{1}{n^{4i}} \sum_{k=1}^{n} \E\bigl[\Delta M_{i,k}^4 \bigl|\cF_{d,k-1}\bigr].
\end{equation}
Hereafter, we deduce from \eqref{CEMG4} and \eqref{PRCLTMATRIX3} that
\begin{equation*}
\sum_{k=1}^{n}
\E\bigl[\|V_n \Delta \cM_k \|^4 \bigl|\cF_{d,k-1}\bigr] 
=O\left( \frac{1}{n^\alpha} \right) \qquad\text{a.s.}
\end{equation*}
which clearly leads to \eqref{PRCLTMATRIX2}. Therefore, we obtain from \citep[Theorem 1]{Tou(91)}, see also
\citep{Tou(94)}, that
\begin{equation}
\label{PRCLTMATRIX4}
V_n \cM_n \underset{n\rightarrow+\infty}{\overset{\cL}{\longrightarrow}} \sqrt{S_{\alpha, \theta}} \cN_d(0, \Sigma_d).
\end{equation}
Hence, we find from \eqref{DEFMr}, \eqref{CVGANr} and \eqref{DEFmatrixVN} that
\begin{equation}
\label{PRCLTMATRIX5}
n^{-\alpha/2} 
\begin{pmatrix}
S_{1,n} \\
S_{2,n}  \\
\vdots\\
S_{d,n} 
\end{pmatrix}
\underset{n\rightarrow+\infty}{\overset{\cL}{\longrightarrow}} \sqrt{S_{\alpha, \theta}} \cN_d(0, \Lambda_d).
\end{equation}
Finally, the multi-dimensional asymptotic normality \eqref{MCLT} follows from 
the almost sure convergence \eqref{eq:kalpha} together with \eqref{DEFSNr}, \eqref{PRCLTMATRIX5} 
and Slutsky’s lemma, which achieves the proof of Lemma \ref{L-MATRIXCLT}.
\end{proof}


\subsection{A direct application of the delta-method} 
\label{Sub-COV}


From Lemma \ref{L-MATRIXCLT}, a multi-dimensional central limit theorem for the first $d$-components of the self-normalized Ewens-Pitman process is as follows

\begin{lem}
\label{L-MATRIXCLTEP}
Assume that $\alpha\in]0,1[$ and $\alpha+\theta>0$. Then, we have 
the multi-dimensional central limit theorem
\begin{equation}
\label{MCLTEP}
\sqrt{K_{n}} 
\begin{pmatrix}
P_{1,n} -p_{\alpha}(1) \vspace{2ex}\\
P_{2,n} - p_{\alpha}(2) \\
\vdots\\
P_{d,n} - p_{\alpha}(d)
\end{pmatrix}
\underset{n\rightarrow+\infty}{\overset{\cL}{\longrightarrow}} \cN_d(0, \Gamma_d)
\end{equation}
where $\Gamma_d=J_d \Lambda_d J_d^T$ and $J_d$ stands for the square matrix of order $d$ such that
\begin{equation}
\label{Jij}
J_{i,j}=\left \{ \begin{array}{ccc}
    \frac{(j-\alpha)^{(i-j)}}{(i-j)!}  & \text{ if } & \ \ 1 \leq j \leq i \vspace{2ex}\\
     0 & \text{ if }  & \ \ j>i.
   \end{array}  \right.
\end{equation}
and $ \cN_d(0, \Gamma_d)$ stands for a centered $d$-dimensional Gaussian random variable with covariance $\Gamma_d$.
\end{lem}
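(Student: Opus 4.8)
The plan is to recognize that the random vector in Lemma~\ref{L-MATRIXCLT} is an invertible fixed linear image of the target vector in \eqref{MCLTEP}; the conclusion then follows by inverting that linear map and invoking the invariance of Gaussian laws under linear transformations (the linear instance of the delta-method). Write
$$W_n=\sqrt{K_n}\begin{pmatrix} P_{1,n}-p_\alpha(1)\\ \vdots\\ P_{d,n}-p_\alpha(d)\end{pmatrix},$$
and let $B_d$ be the lower-triangular matrix of order $d$ with entries $(B_d)_{r,i}=b_{r,i}$ for $1\le i\le r$ and $(B_d)_{r,i}=0$ for $i>r$, where $b_{r,i}$ is the coefficient in \eqref{DEFbr}. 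Since $b_{r,r}=1$, the matrix $B_d$ has unit diagonal and is therefore invertible.

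First I would rewrite the $r$-th component of the vector in \eqref{MCLT}. Using $b_{r,r}=1$ it equals $\sqrt{K_n}\big(\sum_{i=1}^r b_{r,i}P_{i,n}+(-1)^r p_\alpha(r)\big)$, and splitting $b_{r,i}P_{i,n}=b_{r,i}(P_{i,n}-p_\alpha(i))+b_{r,i}p_\alpha(i)$ turns this into
$$(B_dW_n)_r+\sqrt{K_n}\Big(\sum_{i=1}^r b_{r,i}p_\alpha(i)+(-1)^r p_\alpha(r)\Big).$$
By the identity \eqref{BinomialBP}, namely $\sum_{i=1}^r b_{r,i}p_\alpha(i)=-(-1)^r p_\alpha(r)$, the bracketed constant vanishes for every $r$, so the vector in \eqref{MCLT} equals $B_dW_n$ exactly, with no remainder term. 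Lemma~\ref{L-MATRIXCLT} therefore reads $B_dW_n\overset{\cL}{\longrightarrow}\cN_d(0,\Lambda_d)$.

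Next I would apply the constant matrix $B_d^{-1}$: by the continuous mapping theorem, $W_n=B_d^{-1}(B_dW_n)\overset{\cL}{\longrightarrow}\cN_d(0,B_d^{-1}\Lambda_d(B_d^{-1})^T)$. Comparing with the asserted limit $\cN_d(0,J_d\Lambda_d J_d^T)$, the proof reduces to checking the matrix identity $J_d=B_d^{-1}$, with $J_d$ defined in \eqref{Jij}.

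I expect this last identification to be the main obstacle, and I would establish it by verifying $J_dB_d=I_d$ directly. For $1\le k\le i$ its $(i,k)$ entry is $\sum_{j=k}^{i}J_{i,j}b_{j,k}$; substituting \eqref{Jij} and \eqref{DEFbr} and setting $m=j-k$, $N=i-k$, the telescoping of rising factorials $(k-\alpha)^{(m)}(k+m-\alpha)^{(N-m)}=(k-\alpha)^{(N)}$ pulls the $m$-independent factor out and collapses the sum to
$$\frac{(k-\alpha)^{(N)}}{N!}\sum_{m=0}^{N}(-1)^m\binom{N}{m}.$$
Since $\sum_{m=0}^N(-1)^m\binom{N}{m}$ equals $0$ for $N\ge1$ and $1$ for $N=0$, this is $\delta_{i,k}$, which proves $J_d=B_d^{-1}$ and hence $\Gamma_d=J_d\Lambda_d J_d^T$, completing the proof.
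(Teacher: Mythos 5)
Your proof is correct and follows essentially the same route as the paper: the paper also inverts the fixed lower-triangular linear map (phrased there as a delta-method applied to the linear function $g$ with Jacobian $J_d^T$), uses the identity \eqref{BinomialBP} to absorb the constant $(-1)^r p_\alpha(r)$ into the centering, and identifies the inverse matrix as $J_d$. Your explicit telescoping verification that $J_dB_d=I_d$ supplies the detail the paper leaves as ``one can easily check.''
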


\begin{proof}
Let $g$ be the function defined from $\dR^d$ to $\dR^d$ by 
\begin{equation}
g(x_{1}, x_{2}, \cdots, x_{d})
=\left(x_{1}, x_{2}-b_{2,1}x_{1}, \cdots, x_{d} + \sum_{i=1}^{d-1}(-1)^{d-i} b_{d,i}x_{i} 
\right).
\end{equation}
One can easily check that
\begin{equation*}
g\left(x_{1}, x_{2} + b_{2,1}x_{1}, \cdots, x_{d} + \sum_{i=1}^{d-1} b_{d,i}x_{i} \right)
=(x_{1}, x_{2},\cdots, x_{d}) 
.
\end{equation*}
Moreover, it follows from \eqref{BinomialBP} that
\begin{equation*}
g\Big(p_{\alpha}(1), -p_{\alpha}(2),
\cdots, (-1)^{d-1}p_{\alpha}(d)\Big)=
(p_{\alpha}(1), p_{\alpha}(2), \cdots, p_{\alpha}(d))
.
\end{equation*}
Since $\nabla g(x_{1}, x_{2},\cdots, x_{d})=J_d^T$, we immediately obtain \eqref{MCLTEP} 
from \eqref{MCLT} through a direct application of the delta-method.  
\end{proof}

\subsection{Determination of the infinite-dimensional covariance matrix} 
\label{Sub-COV}


We already saw in Lemma \ref{L-MATRIXCLTEP} that the asymptotic covariance matrix $\Gamma_d$ is given by
\begin{equation}
\label{DEFGamma}
\Gamma_d=J_d \Lambda_d J_d^T
\end{equation}
where the  $(i,j)$-th elements of the matrices $\Lambda_d$ and $J_d$ are given by 
\begin{equation*}
\Lambda_{i,j}= \frac{(-1)^{i+j}(i+j)!}{i!j!}p_{\alpha}(i+j)
\end{equation*}
and
\begin{equation*}
J_{i,j}=\left \{ \begin{array}{ccc}
    \frac{(j-\alpha)^{(i-j)}}{(i-j)!}  & \text{ if } & \ \ 1 \leq j \leq i \vspace{2ex}\\
     0 & \text{ if }  & \ \ j>i.
   \end{array}  \right.
\end{equation*}
Denote by $\Gamma_\alpha$ the infinite-dimensional covariance matrix whose section of order $d$ is given by $\Gamma_d$.
The expression of $\Gamma_\alpha$ can be drastically simplified as follows.

\begin{lem}
\label{L-COV}
The infinite-dimensional covariance matrix $\Gamma_\alpha$ is given by
\begin{equation}
\label{EXPGammad}
\Gamma_\alpha=diag(p_\alpha)-p_\alpha p_\alpha^T
\end{equation}
where $p_\alpha=(p_\alpha(1),p_\alpha(2),\ldots)$ stands for the vector containing all the components of the Sibuya distribution.
\end{lem}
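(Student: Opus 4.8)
The plan is to establish \eqref{EXPGammad} entrywise by encoding both sides in a single bivariate generating function and checking that the two functions coincide as formal power series (equivalently, for $x,y$ in a neighbourhood of $0$). The only analytic input I would need is the generating function of the Sibuya distribution,
\begin{equation*}
P(z):=\sum_{r\ge1}p_\alpha(r)z^r=1-(1-z)^\alpha,
\end{equation*}
which follows from \eqref{Sibuya} and the binomial series $(1-z)^\alpha=\sum_{r\ge0}\binom{\alpha}{r}(-z)^r$, since $\binom{\alpha}{r}(-1)^r=-p_\alpha(r)$ for $r\ge1$. I would also note that $(\Gamma_d)_{i,j}$ does not depend on $d$ for $d\ge\max(i,j)$, because $J_d$ is lower triangular, so it suffices to identify the full generating function over all $i,j\ge1$.

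First I would set $\Phi(x,y)=\sum_{i,j\ge1}(\Gamma_d)_{i,j}\,x^iy^j$ and expand using $\Gamma_d=J_d\Lambda_dJ_d^T$, which gives
\begin{equation*}
\Phi(x,y)=\sum_{k,l\ge1}\Lambda_{k,l}\Big(\sum_{i\ge k}J_{i,k}x^i\Big)\Big(\sum_{j\ge l}J_{j,l}y^j\Big).
\end{equation*}
The column sums are explicit: from \eqref{Jij} and the binomial series,
\begin{equation*}
\sum_{i\ge k}J_{i,k}x^i=x^k\sum_{m\ge0}\frac{(k-\alpha)^{(m)}}{m!}x^m=x^k(1-x)^{\alpha-k}=(1-x)^\alpha\,\xi^k,\qquad \xi:=\frac{x}{1-x},
\end{equation*}
and similarly with $\eta:=y/(1-y)$ for the $y$-sum. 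Substituting $\Lambda_{k,l}=(-1)^{k+l}\binom{k+l}{k}p_\alpha(k+l)$ from \eqref{Lambdaij} and collecting by $m=k+l$, the double sum over $k,l\ge1$ is evaluated by the binomial theorem, after which re-indexing produces
\begin{equation*}
\sum_{k,l\ge1}\Lambda_{k,l}\xi^k\eta^l=P(-(\xi+\eta))-P(-\xi)-P(-\eta)=(1+\xi)^\alpha+(1+\eta)^\alpha-(1+\xi+\eta)^\alpha-1 ,
\end{equation*}
where the last equality uses $P(-w)=1-(1+w)^\alpha$.

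The decisive simplification comes from $1+\xi=(1-x)^{-1}$, $1+\eta=(1-y)^{-1}$, and
\begin{equation*}
1+\xi+\eta=1+\frac{x}{1-x}+\frac{y}{1-y}=\frac{1-xy}{(1-x)(1-y)} .
\end{equation*}
Multiplying the previous display by the prefactor $(1-x)^\alpha(1-y)^\alpha$ then yields
\begin{equation*}
\Phi(x,y)=(1-x)^\alpha+(1-y)^\alpha-(1-xy)^\alpha-(1-x)^\alpha(1-y)^\alpha .
\end{equation*}
On the other hand, the generating function of $\mathrm{diag}(p_\alpha)-p_\alpha p_\alpha^T$ is $\sum_i p_\alpha(i)(xy)^i-P(x)P(y)=P(xy)-P(x)P(y)$, and expanding this with $P(z)=1-(1-z)^\alpha$ produces exactly the same four-term expression; equating coefficients of $x^iy^j$ gives \eqref{EXPGammad}.

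The main obstacle is purely bookkeeping rather than conceptual: one must correctly handle the restriction $k,l\ge1$ when collapsing the double sum, since the omitted boundary terms $k=0$ and $l=0$ are precisely what generate the separate $P(-\xi)$ and $P(-\eta)$ contributions, and a sign or range error there would spoil the match. Once those boundary terms are tracked, every remaining step is a substitution, and the computation is uniform in $d$, so it simultaneously identifies every finite section $\Gamma_d$ and hence the infinite-dimensional matrix $\Gamma_\alpha$.
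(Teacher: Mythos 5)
Your proposal is correct and follows essentially the same route as the paper: the bivariate generating function of $J_d\Lambda_d J_d^T$, the column sums $\sum_{i\ge k}J_{i,k}x^i=(1-x)^\alpha(x/(1-x))^k$ via the binomial series, the substitution $1+\xi+\eta=(1-xy)/((1-x)(1-y))$, and the identification $\Phi(x,y)=P(xy)-P(x)P(y)$. The only cosmetic difference is that you evaluate the inner double sum directly as $P(-(\xi+\eta))-P(-\xi)-P(-\eta)$, whereas the paper reaches the same expression through an integrated form of the rising-factorial generating function.
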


\begin{proof}
First of all, it follows from Newton’s generalized Binomial theorem that the generating function of the rising factorial satisfies, for all $a\in \dR$ and $z \in \dC$ wsuch that $|z|<1$, 
\begin{equation}
\label{GFRising}
\sum_{n=0}^\infty \frac{(a)^{(n)}}{n!} z^n= \frac{1}{(1-z)^a}.
\end{equation}
Denote by $G$ the probability generating function of the Sibuya distribution. We obtain from \eqref{Sibuya} that for all $s \in \dR$ such that $|s|<1$,
\begin{equation}
\label{PGF-Sibuya}
G(s)= \sum_{n=1}^\infty p_\alpha(n) s^n=\sum_{n=1}^\infty \frac{\alpha (1-\alpha)^{(n-1)}}{n!} s^n
=-\sum_{n=1}^\infty \frac{(-\alpha)^{(n)}}{n!} s^n
=1 -(1-s)^\alpha.
\end{equation}
Let $F$ be the bivariate generating function defined, for all $s,t \in \dR$ with $|s|<1$ and $|t|<1$, by
\begin{equation}
\label{GF-Gamma}
F(s,t)= \sum_{i=1}^\infty \sum_{j=1}^\infty \Gamma_{i,j} s^i t^j.
\end{equation}
We have from \eqref{DEFGamma} that
\begin{align}
F(s,t) &= \sum_{i=1}^\infty \sum_{j=1}^\infty \sum_{k=1}^i \sum_{\ell=1}^j (-1)^{k+\ell} \binom{k+\ell}{k} 
p_\alpha(k+\ell) \frac{(k-\alpha)^{(i-k)}}{(i-k)!} \frac{(\ell-\alpha)^{(j-\ell)}}{(j-\ell)!} s^i t^j \notag \\
&= \sum_{k=1}^\infty \sum_{\ell=1}^\infty (-1)^{k+\ell} \binom{k+\ell}{k} p_\alpha(k+\ell) \sum_{i=k}^\infty \frac{(k-\alpha)^{(i-k)}}{(i-k)!} s^i
\sum_{j=\ell}^\infty \frac{(\ell-\alpha)^{(j-\ell)}}{(j-\ell)!} t^j.
\label{eq:GFF1}
\end{align}
However, identity \eqref{GFRising} leads to
\begin{equation*}
\sum_{i=k}^\infty \frac{(k-\alpha)^{(i-k)}}{(i-k)!} s^i=s^k \sum_{i=0}^\infty \frac{(k-\alpha)^{(i)}}{i!} s^i= \frac{s^k}{(1-s)^{k-\alpha}}
\end{equation*}
as well as to
\begin{equation*}
\sum_{j=\ell}^\infty \frac{(\ell-\alpha)^{(j-\ell)}}{(j-\ell)!} t^j=t^\ell \sum_{j=0}^\infty \frac{(\ell-\alpha)^{(j)}}{j!} t^j= \frac{t^\ell}{(1-t)^{\ell-\alpha}}.
\end{equation*}
Consequently, by choosing
$$
a_s=\frac{s}{1-s} \hspace{2cm} \text{and} \hspace{2cm} b_t=\frac{t}{1-t}
$$
we deduce from \eqref{eq:GFF1} that for all $s,t \in \dR$ such that $|s|<1$ and $|t|<1$,
\begin{align}
F(s,t) &= (1-s)^\alpha (1-t)^\alpha \sum_{k=1}^\infty \sum_{\ell=1}^\infty (-1)^{k+\ell} \binom{k+\ell}{k} 
p_\alpha(k+\ell) a_s^k b_t^\ell \notag \\
&= (1-s)^\alpha (1-t)^\alpha \sum_{n=2}^\infty \sum_{k=1}^{n-1} (-1)^{n} \binom{n}{k} p_\alpha(n) a_s^k b_t^{n-k} \notag \\
&= (1-s)^\alpha (1-t)^\alpha \sum_{n=2}^\infty  (-1)^{n} p_\alpha(n) \sum_{k=1}^{n-1} \binom{n}{k}  a_s^k b_t^{n-k} \notag \\
&= \alpha (1-s)^\alpha (1-t)^\alpha \sum_{n=2}^\infty  \frac{(-1)^{n} (1-\alpha)^{(n-1)}}{n!} \Big( (a_s+b_t)^n - a_s^n - b_t^n \Big).  
\label{eq:GFF2}
\end{align}
Furthermore, by integrating \eqref{GFRising}, we obtain that for all $a\in \dR$ with $a \neq 1$ and for all $z \in \dC$ such that $|z|<1$, 
\begin{equation}
\label{IGFRising}
\sum_{n=1}^\infty \frac{(a)^{(n)}}{(n+1)!} z^{n+1}= -\frac{1}{1-a} \Big( (1-z)^{1-a} -1 \Big),
\end{equation}
which implies that
\begin{equation}
\label{IGFRisingdev}
\sum_{n=2}^\infty \frac{(-1)^n (a)^{(n-1)}}{n!} z^{n}= z -\frac{1}{1-a} \Big( (1+z)^{1-a} -1 \Big).
\end{equation}
Therefore, it follows from \eqref{PGF-Sibuya}, \eqref{eq:GFF2} and \eqref{IGFRisingdev} that for all $s,t \in \dR$ such that $|s|<1$ and $|t|<1$,
\begin{align}
F(s,t)&= (1-s)^\alpha (1-t)^\alpha \Big( (1+a_s)^{\alpha} + (1+b_t)^{\alpha} - (1+a_s+b_t)^{\alpha} -1  \Big) \notag \\
&= (1-s)^\alpha + (1-t)^\alpha -(1-st)^\alpha - (1-s)^\alpha (1-t)^\alpha \notag \\
&= 1-(1-st)^\alpha - \Big( 1-(1-s)^\alpha \Big) \Big( 1-(1-t)^\alpha \Big) \notag \\
&= G(st)-G(s)G(t)
\label{eq:GFF3}
\end{align}
since
$$
1+a_s= \frac{1}{1-s}, \hspace{1cm} 1+b_t= \frac{1}{1-t}, \hspace{1cm} 1+a_s+b_t= \frac{1-st}{(1-s)(1-t)}.
$$
Finally, we deduce from \eqref{PGF-Sibuya}, \eqref{GF-Gamma} and \eqref{eq:GFF3} that for all $s,t \in \dR$ such that $|s|<1$ and $|t|<1$,
\begin{equation*}
F(s,t) = \sum_{i=1}^\infty \sum_{j=1}^\infty \Gamma_{i,j} s^i t^j= \sum_{i=1}^\infty p_\alpha(i) (st)^i - 
\sum_{i=1}^\infty \sum_{j=1}^\infty p_\alpha(i) p_\alpha(j) s^i t^j.
\end{equation*}
It immediately leads to 
\begin{equation*}
\Gamma_{i,j}=\left \{ \begin{array}{ccc}
    p_\alpha(i)(1-p_\alpha(i)) & \text{ if } & \ \ i=j \vspace{2ex}\\
     -p_\alpha(i)p_\alpha(j) & \text{ if }  & \ \ i \neq j
   \end{array}  \right.
\end{equation*}
which completes the proof of Lemma \ref{L-COV}.
\end{proof}

\subsection{An infinite-dimensional central limit theorem}
\label{Sub-L2}

For all $n\geq1$, denote by $Q_n$ the random element in $\ell^{2}$ defined by $Q_n=(Q_{1,n},\,Q_{2,n},\ldots)$  where, for
all $r \geq 1$, 
\begin{displaymath}
Q_{r,n}=\sqrt{K_{n}}\big(P_{r,n}-p_{\alpha}(r)\big).
\end{displaymath}
For all $d\geq 1$, let $\Pi_d$ be the orthogonal projection from $\dR^\infty$ onto $\dR^d$. Thanks to
Lemma \ref{L-MATRIXCLTEP}, we already  know that for all $d\geq 1$, $\Pi_d(Q_n)$ converges in distribution to 
a centered Gaussian random vector with covariance matrix $\Gamma_d$, that is the weak convergence of the finite-dimensional distributions of $Q_n$. Lemma \ref{L-COV} shows that the infinite-dimensional covariance matrix
$\Gamma_\alpha$ is trace-class as $\text{Tr}(\Gamma_\alpha) \leq 1$. Moreover, it follows from the Kolmogorov extension theorem that there exists a unique centered Gaussian process $Q_\alpha=\cG(\Gamma_\alpha)$ with covariance matrix $\Gamma_\alpha$.

In order to complete the proof of Theorem \ref{main_teo}, it only remains to show that the sequence 
$(Q_n)$ is tight, see e.g. \citet[Chapter 10]{Led(11)} and the references therein. It will be done thanks to our latest lemma.

\begin{lem}
\label{L-IDCLT}
The infinite-dimensional sequence $(Q_n)$ of $\ell^{2}$-valued random variables is tight. More precisely, we have
\begin{equation}\label{toprove}
\lim_{d\rightarrow+\infty}\sup_{n}\sum_{r=d}^\infty\E[(Q_{r,n})^{2}]=0.
\end{equation}
\end{lem}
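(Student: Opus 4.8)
The plan is to prove the uniform-tail estimate \eqref{toprove} directly, since it implies tightness of $(Q_n)$ in $\ell^2$ by a standard criterion (controlling the tail mass of the second moments uniformly in $n$). The strategy is to bound $\E[(Q_{r,n})^2] = K_n$-weighted second moments of $P_{r,n}-p_\alpha(r)$ uniformly, and then show the resulting bound is summable in $r$ with a tail that vanishes as $d\to\infty$ uniformly in $n$.

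**First I would** aim to obtain a clean uniform-in-$n$ bound on $\E[(Q_{r,n})^2] = \E[K_n(P_{r,n}-p_\alpha(r))^2]$. The natural route is to expand this as a combination of second moments of $K_{r,n}$ and cross moments of $K_{r,n}$ with $K_n$, using the identity $P_{r,n} = K_{r,n}/K_n$. Rather than working with the self-normalized quantity directly, I would use the moment formulae for $K_n$ and for the pair $(K_n, K_{r,n})$ promised in the appendix, since the ratio $K_{r,n}/K_n$ is awkward to handle via moments alone. A cleaner alternative is to leverage the martingale structure: since $M_{r,n}$ is explicitly a linear combination of $K_{1,n},\ldots,K_{r,n},K_n$ with known predictable quadratic variation growth $\langle M_r\rangle_n \sim c_r\, n^{2r-\alpha} S_{\alpha,\theta}$ from Lemma \ref{L-KEY}, one can read off the order of the second moments, but the constants $c_r$ grow and must be controlled carefully in $r$.

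**The key quantitative step** is to show that $\sup_n \E[(Q_{r,n})^2]$ is bounded by something summable in $r$ with vanishing tail. Heuristically, from the limiting covariance $\Gamma_\alpha = \mathrm{diag}(p_\alpha)-p_\alpha p_\alpha^T$ we expect $\E[(Q_{r,n})^2]\to \Gamma_{r,r}=p_\alpha(r)(1-p_\alpha(r))\le p_\alpha(r)$, and since $\sum_r p_\alpha(r)=1$ by \eqref{sum_sibuya}, the tail $\sum_{r\ge d}p_\alpha(r)\to 0$. So the target limiting sum is finite and its tail vanishes; the real work is promoting this pointwise-in-$n$ statement to a \emph{uniform} bound. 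I would establish an inequality of the form $\sup_n \E[(Q_{r,n})^2]\le C p_\alpha(r)$ (or at worst $C r^{-(1+\alpha)}$, using $p_\alpha(r)\sim \alpha r^{-(1+\alpha)}/\Gamma(1-\alpha)$ for large $r$, which is summable since $\alpha>0$), and then \eqref{toprove} follows by dominated convergence on the tail.

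**The hard part will be** controlling the ratio $K_{r,n}/K_n$ uniformly in $n$, because $K_n$ appears in the denominator and can be small (indeed $K_n\ge 1$ only guarantees a crude bound). The self-normalization is exactly what makes a uniform second-moment estimate delicate: one cannot simply bound $\E[K_n P_{r,n}^2]=\E[K_{r,n}^2/K_n]$ by $\E[K_{r,n}^2]$ since $K_n$ may be $O(n^\alpha)$. I expect the cleanest resolution is to use the deterministic constraint $\sum_{r\ge 1} r K_{r,n}=n$ together with $K_{r,n}\le K_n$, giving $\sum_{r\ge 1} r P_{r,n}=n/K_n$ and $P_{r,n}\le 1$, which furnishes an almost-sure bound $\sum_{r\ge d} P_{r,n}^2 \le \sum_{r\ge d}P_{r,n}\le 1$; combined with a uniform control of $\E[K_n P_{r,n}]$ via the moment formulae, this should close the tail estimate. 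Carefully matching the constants so that the bound is genuinely summable in $r$, and verifying that the $d$-tail vanishes uniformly rather than merely for each fixed $n$, is where the main technical care is required.
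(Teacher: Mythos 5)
Your overall frame agrees with the paper's: expand
$\E[Q_{r,n}^{2}]=\E[K_{r,n}^{2}/K_{n}]-2p_{\alpha}(r)\E[K_{r,n}]+p_{\alpha}(r)^{2}\E[K_{n}]$,
aim for a bound of order $p_{\alpha}(r)$ uniformly in $n$, and conclude from $\sum_{r}p_{\alpha}(r)=1$. The gap lies in your proposed resolution of the ``hard part''. Each of the three terms above is individually of order $n^{\alpha}$: indeed $\E[K_{r,n}^{2}/K_{n}]\approx p_{\alpha}(r)^{2}\E[S_{\alpha,\theta}]\,n^{\alpha}+p_{\alpha}(r)$, $\E[K_{r,n}]\approx p_{\alpha}(r)\E[S_{\alpha,\theta}]\,n^{\alpha}$ and $\E[K_{n}]\approx\E[S_{\alpha,\theta}]\,n^{\alpha}$, and it is only the exact cancellation $1-2+1=0$ of the $n^{\alpha}$ coefficients that leaves the bounded remainder of order $p_{\alpha}(r)$. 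Your almost-sure bounds $P_{r,n}\leq 1$, $\sum_{r\geq d}P_{r,n}\leq 1$ and $\sum_{r}rK_{r,n}=n$ only yield one-sided estimates such as $\sum_{r\geq d}\E[K_{r,n}^{2}/K_{n}]\leq\sum_{r\geq d}\E[K_{r,n}]=O\big(n^{\alpha}\sum_{r\geq d}p_{\alpha}(r)\big)$, which still diverges in $n$ for each fixed $d$; inequalities of this kind cannot detect the second-order cancellation, so the supremum over $n$ in \eqref{toprove} is not controlled by your argument.

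The missing ingredient is a method to evaluate $\E[K_{r,n}^{2}/K_{n}]$ to precision $O(1)$, that is, including the constant term beyond the leading $n^{\alpha}$ term. The paper achieves this through the joint moment identity \eqref{eq_finalmom} taken at the \emph{negative} power $q=-1$, which factorizes $\E[K_{r,n}/K_{n}]$ and $\E[(K_{r,n}^{2}-K_{r,n})/K_{n}]$ into a product of $\E[K_{r,n}]$ (resp.\ $\E[K_{r,n}(K_{r,n}-1)]$) with the expectation of $n^{\alpha}$ divided by one plus the number of blocks of an auxiliary Ewens--Pitman partition of $[n-pr]$ with shifted parameter $\theta+p\alpha$. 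Passing to the limit in these auxiliary expectations in turn requires uniform integrability of the inverse normalized block counts, which is exactly the content of the uniform negative-moment bound of Lemma \ref{lem:unif-neg-moment}. Neither of these two inputs --- the $q=-1$ factorization nor the uniform negative-moment estimate --- appears in your plan, and without them the cancellation, and hence \eqref{toprove}, cannot be established.
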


\begin{proof}
The proof of Lemma \ref{L-IDCLT} relies on \citet[Theorem 2.3.3 and Corollary 2.3.1]{Kal(95)}.
We have from \eqref{DEFPrn} that
\begin{equation}
\label{NORML2}
\| Q_n \|_{\ell^{2}}^2= \sum_{r=1}^\infty Q_{r,n}^2= \sum_{r=1}^\infty K_{n}\big(P_{r,n}-p_{\alpha}(r)\big)^2
=\sum_{r=1}^\infty \frac{L_{r,n}^2}{K_{n}}
\end{equation}
where $L_{r,n}=K_{r,n}-p_{\alpha}(r)K_n$.
Hence, for all $r \geq 1$,
\begin{equation}\label{eq:main}
\E[Q_{r,n}^{2}]=\E\left[\frac{K_{r,n}^{2}}{K_{n}}\right]-2p_{\alpha}(r)\E[K_{r,n}]+(p_{\alpha}(r))^{2}\E[K_{n}].
\end{equation}
On the one hand, it follows from \citet[Equations 33]{Ber(24)} with $p=1$ that
\begin{equation}
\label{MEANKN}
\E[K_{n}] =\frac{\theta}{\alpha}\left(\frac{(\alpha+\theta)^{(n)}}{(\theta)^{(n)}}-1\right)
= \E[S_{\alpha, \theta}] \left(\frac{\Gamma(n+\alpha+\theta)}{\Gamma(n+\theta)}-\frac{\Gamma(\alpha+\theta)}{\Gamma(\theta)}\right).
\end{equation}
Hence, we obtain from \eqref{MEANKN} together with the asymptotic behavior of the ratio of two Gamma functions that
\begin{eqnarray}\label{AEMEANKN}
\E[K_{n}] &=& \E[S_{\alpha, \theta}] \left(n^\alpha \left(1+ \frac{\alpha(2 \theta + \alpha -1)}{2n} + O\Big(\frac{1}{n^2}\Big) \right)-\frac{\Gamma(\alpha+\theta)}{\Gamma(\theta)}\right),  \notag\\
&=& \E[S_{\alpha, \theta}] n^\alpha \left(1  -\frac{\Gamma(\alpha+\theta)}{n^\alpha\Gamma(\theta)} +\frac{\alpha(2 \theta + \alpha -1)}{2n} + O\Big(\frac{1}{n^2}\Big) \right).
\end{eqnarray}
On the other hand, we deduce from \citet[Equations 95]{Ber(24)} with $p=1$, $2$ that
\begin{eqnarray}
\label{MEANKrN}
\E[K_{r,n}] &=& p_{\alpha}(r)\frac{n!}{(n-r)!}\left(\frac{\theta}{\alpha}\right)\frac{(\alpha+\theta)^{(n-r)}}{(\theta)^{(n)}}, \\
\E[K_{r,n}(K_{r,n}-1)] &=& (p_{\alpha}(r))^2\frac{n!}{(n-2r)!}\left(\frac{\theta}{\alpha}\right)^{(2)}\frac{(2\alpha+\theta)^{(n-2r)}}{(\theta)^{(n)}}.\label{VarKrN}
\end{eqnarray}
As before, we obtain from \eqref{MEANKrN} and \eqref{VarKrN} that
\begin{eqnarray}
\label{AEMEANKrN}
\E[K_{r,n}] 
&=& p_{\alpha}(r) \E[S_{\alpha, \theta}] n^\alpha \left(1 +\frac{C_{\alpha,\theta}(r)}{2n} + O\Big(\frac{1}{n^2}\Big) \right), \\
\E[K_{r,n}(K_{r,n}-1)] &=& (p_{\alpha}(r))^2\frac{(\alpha+\theta) \Gamma(\theta +1)}{\alpha^2 \Gamma(2\alpha + \theta)}
n^{2\alpha} \left(1 +\frac{D_{\alpha,\theta}(r)}{2n} + O\Big(\frac{1}{n^2}\Big) \right)
\label{AEVarKrN}
\end{eqnarray}
where $C_{\alpha,\theta}(r)$ and $D_{\alpha,\theta}(r)$ are given by $C_{\alpha,\theta}(r)=r(1-r)+(\alpha-r)(2 \theta + \alpha -r-1)$ and $D_{\alpha,\theta}(r)=2r(1-2r)+2(\alpha-r)(2(\theta + \alpha -r)-1)$.
Hereafter, we clearly have the decomposition
\begin{equation}
\label{MAINDECORATIO}
\E\left[\frac{K_{r,n}^{2}}{K_{n}}\right]=\E\left[\frac{K_{r,n}}{K_{n}}\right]+ \E\left[\frac{K_{r,n}^2-K_{r,n}}{K_{n}}\right].
\end{equation}
The moment equation \eqref{eq_finalmom} with $p=1$ and $q=-1$, leads to a kind of independence formula
\begin{equation}\label{RATIO1}
\E\left[\frac{K_{r,n}}{K_{n}}\right]=\frac{\E[K_{r,n}] \E[Z_{r,n}^*]}{n^\alpha}
\qquad\text{where}\qquad Z_{r,n}^*=\frac{n^\alpha}{K^{\ast}_{n-r}+1}
\end{equation}
$K^{\ast}_{n-r}$ being the number of blocks in a random partition of $[n-r]$ distributed as the Ewens-Pitman model with parameters $\alpha\in[0,1[$ and $\theta +\alpha>-\alpha$. Moreover, we also obtain from \eqref{eq_finalmom} with $p=2$ and $q=-1$ that
\begin{equation}\label{RATIO2}
\E\left[\frac{K_{r,n}^2-K_{r,n}}{K_{n}}\right]=\frac{\E[K_{r,n}^2-K_{r,n}]\E[Z_{r,n}^{**}]}{n^\alpha}
\qquad\text{where}\qquad Z_{r,n}^{**}=\frac{n^\alpha}{K^{\ast \ast}_{n-2r}+1}
\end{equation}
$K^{\ast\ast}_{n-2r}$ being the number of blocks in a random partition of $[n-2r]$ distributed as the Ewens-Pitman model with parameters $\alpha\in[0,1[$ and $\theta+2\alpha>-\alpha$. It follows from the almost sure convergence \eqref{eq:kalpha} that
\begin{equation}
\label{ASLIMKN*}
\lim_{n\rightarrow+\infty}\frac{K^{\ast}_{n-r}}{n^{\alpha}}=S_{\alpha,\theta+ \alpha}\qquad\text{a.s.}
\end{equation}
and
\begin{equation}
\label{ASLIMKN**}
\lim_{n\rightarrow+\infty}\frac{K^{\ast \ast}_{n-2r}}{n^{\alpha}}=S_{\alpha,\theta+ 2\alpha}\qquad\text{a.s.}
\end{equation}
Hence, we immediately obtain from \eqref{ASLIMKN*} and \eqref{ASLIMKN**} that
\begin{equation}
\label{ASLIMZN*}
\lim_{n\rightarrow+\infty} Z_{r,n}^{*} =S_{\alpha,\theta+ \alpha}^{-1}\qquad\text{a.s.}
\end{equation}
and
\begin{equation}
\label{ASLIMZN**}
\lim_{n\rightarrow+\infty} Z_{r,n}^{**} =S_{\alpha,\theta+ 2\alpha}^{-1}\qquad\text{a.s.}
\end{equation}
One can observe that the random variables $S_{\alpha,\theta+ \alpha}$ and $S_{\alpha,\theta+ 2\alpha}$ are positive and therefore invertible almost surely. Our goal is now to show that the convergences in \eqref{ASLIMZN*} and \eqref{ASLIMZN**} are also true in $\dL^1$. Since the parameters $\theta+\alpha>0$ and $\theta+2\alpha>0$, we deduce from Lemma \ref{lem:unif-neg-moment} and the classical de La Vall\'ee Poussin criterion that the sequence $(Z_{r,n}^{*})$ and $(Z_{r,n}^{**})$ are uniformly integrable. Hence, it follows from \eqref{ASLIMZN*} and \eqref{ASLIMZN**} together with Vitali's convergence theorem that
\begin{equation}
\label{LIMEZN*}
\lim_{n\rightarrow+\infty} \E[Z_{r,n}^*]=\E\big[S_{\alpha,\theta+ \alpha}^{-1}\big]
=\frac{\alpha \Gamma(\alpha+ \theta)}{\Gamma(\theta+1)}= \frac{1}{\E[S_{\alpha,\theta}]}
\end{equation}
and
\begin{equation}
\label{LIMEZN**}
\lim_{n\rightarrow+\infty} \E[Z_{r,n}^{**}]=\E\big[S_{\alpha,\theta+ 2\alpha}^{-1}\big]
=\frac{\alpha \Gamma(2\alpha+ \theta)}{\Gamma(\alpha+\theta+1)}.
\end{equation}
Putting together the two contributions in \eqref{MAINDECORATIO}, we obtain from \eqref{AEMEANKrN}, \eqref{AEVarKrN}, \eqref{LIMEZN*} and \eqref{LIMEZN**} that
\begin{equation}
\label{AERATIO}
\E\left[\frac{K_{r,n}^{2}}{K_{n}}\right]=(p_{\alpha}(r))^2 \E[S_{\alpha, \theta}] n^\alpha + p_{\alpha}(r)(1+o(1)).
\end{equation}
Finally, we find from \eqref{eq:main}, \eqref{AEMEANKN}, \eqref{AEMEANKrN} and \eqref{AERATIO} that
\begin{equation}
\label{AEMEAN2Qrn}
\E[Q_{r,n}^{2}]= p_{\alpha}(r)(1+o(1)) - \left(\frac{\theta}{\alpha}\right) (p_{\alpha}(r))^2.
\end{equation}
However, we clearly have from \eqref{sum_sibuya} that
\begin{equation}
\label{Rest-sibuya}
\lim_{d\rightarrow+\infty}\sum_{r=d}^\infty p_\alpha(r)=0 \qquad \text{and}
\qquad
\lim_{d\rightarrow+\infty}\sum_{r=d}^\infty (p_\alpha(r))^2=0.
\end{equation}
Therefore, \eqref{AEMEAN2Qrn} and \eqref{Rest-sibuya} lead to \eqref{toprove}, which completes the proof of Lemma \ref{L-IDCLT}. 
\end{proof}


\section{Conclusion}\label{sec3}

It has been shown that the self-normalized Ewens-Pitman process $(P_{1,n},P_{2,n},\ldots)$, once properly centered by the Sibuya distribution $p_\alpha$ and normalized by $\sqrt{K_n}$, converges in distribution to the centered Gaussian process $\mathcal{G}(\Gamma_\alpha)$. While this establishes a functional central limit theorem, several related questions remain open and would further deepen the understanding of the probabilistic structure of the self-normalized Ewens-Pitman process.

A first direction concerns large deviations in finite dimension. Let $d\geq 1$ be fixed. Is it possible to establish a large deviation principle (LDP) in $\mathbb{R}^d$, in the sense of \citet{Dem(98)}, for the random vector $(P_{1,n},\ldots,P_{d,n})?$ Even in the special case $d=1$, this question appears far from being obvious, as it requires a sharp control of rare events in a regime where both the
centering and the normalization depend intrinsically on the random number of blocks $K_n$. A positive answer would open the way to obtaining an LDP for estimators of the parameter $\alpha$, such as the estimator 
$\widehat{\alpha}_n$, see also \citet{Fen(98)} for related motivations.

A second direction concerns functional large deviations in infinite dimension. Let $\mu_n$ denote the random measure naturally associated with the self-normalized Ewens-Pitman process $(P_{1,n},P_{2,n},\ldots)$. Is it possible to prove a functional LDP, again in the sense of \citet{Dem(98)}, for the random measure $\mu_n$, in a suitable topology on the space of probability measures, or on an appropriate function space? Such a result would provide a refined description of atypical fluctuations beyond the Gaussian regime and would complement the central limit theorem proved here.

Finally, beyond purely asymptotic distributional statements, it would be of considerable interest to develop \emph{quantitative} versions of our results. In particular, one may ask whether the Gaussian approximation can be strengthened to Berry--Esseen type bounds, either in finite dimension for the random vector $(P_{1,n},\ldots,P_{d,n})$, or at the process level. Establishing explicit convergence rates, possibly depending on $\alpha$, $d$, and the growth of $K_n$, would substantially enhance the applicability of the limit theory, especially for statistical inference based on moderate sample sizes.


\appendix
\section{}\label{appa}
\renewcommand{\thesection}{A}
\renewcommand{\thesubsection}{A.\arabic{subsection}}
\renewcommand{\thesubsubsection}{A.\arabic{subsection}.\arabic{subsubsection}}
\renewcommand{\theequation}{A.\arabic{equation}}
\renewcommand{\thethm}{\thesection.\arabic{thm}}

\setcounter{subsection}{0}
\setcounter{subsubsection}{0}
\setcounter{equation}{0}
\setcounter{thm}{0}

\subsection{Sequential or generative construction of the Ewens-Pitman model}

We recall the sequential or generative construction of the Ewens-Pitman model given in \citet[Proposition 9]{Pit(95)}; see also \citet[Chapter 3]{Pit(06)}. For any $\alpha\in[0,1[$ and  $\alpha+\theta>0$, the random partition of $[n]$ distributed according to the Ewens-Pitman model, can be recursively constructed as follows: 
conditionally on the total size $K_n=k$ of the partition and on the partition subsets $\{A_1, \ldots ,A_k \}$ of corresponding sizes $(n_1, \ldots,n_k)$, the partition $[n+1]$ is an extension of $[n]$ such that the element $n+1$ is attached to subset $A_i$ for $1 \leq i \leq k$, with probability
$$
\frac{n_i- \alpha}{n+ \theta},
$$
or forms a new subset with probability
$$
\frac{\alpha k + \theta}{n+ \theta}.
$$ 
Since $n=n_1+ \cdots+n_k$, we clearly have
\begin{equation*}
    \frac{1}{n+\theta}\sum_{i=1}^{k} (n_i - \alpha) + \frac{ \alpha k + \theta}{n+\theta}= \frac{n-\alpha k + \alpha k + \theta}{n+ \theta}=1.
\end{equation*}
Hereafter, for $K_{n}\in\{1,\ldots,n\}$, let $\mathbf{N}_{n}=(N_{1,n},\ldots,N_{K_{n},n})$ be the sizes of 
the partition subsets $\{A_1, \ldots,A_{K_n}\}$. For $r=1,\ldots,n$, denote 
$$K_{r,n}=\sum_{i=1}^{K_{n}}\rI_{\{N_{i,n}=r\}}.$$ 
\citet[Proposition 9]{Pit(95)} shows that the above sequential construction leads to the joint distribution \eqref{epsm} of $\mathbf{K}_{n}=(K_{1,1},\ldots,K_{n,n})$. Equation \eqref{DEFPN} follows from the above sequential construction since
\begin{displaymath}
\mathbb{P}(K_{n+1}=K_{n}+1\,|\,K_{n})=\frac{\alpha K_{n}+ \theta}{n+\theta}.
\end{displaymath}
We also deduce Equation \eqref{DEFIPr} from the above construction
as for $r=1$,
\begin{displaymath}
\dP(\xi_{1,n+1}=1\,|\,\mathcal{F}_{n})=\frac{\alpha K_{n}+ \theta}{n+\theta}.
\end{displaymath}
Moreover, for $r=1$ and $n_1=1$,
\begin{displaymath}
 \dP(\xi_{1,n+1}=-1\,|\,\mathcal{F}_{n})=\frac{(1-\alpha)K_{1,n}}{n+\theta}.
\end{displaymath}
In addition, for all $r \geq 2$ and for $n_i=r-1$,
\begin{displaymath}
\dP(\xi_{r,n+1}=1\,|\,\mathcal{F}_{n})=\frac{(r-1-\alpha)K_{r-1,n}}{n+\theta},
\end{displaymath}
while, for $n_i=r$,
\begin{displaymath}
\dP(\xi_{r,n+1}=-1\,|\,\mathcal{F}_{n})=\frac{(r-\alpha)K_{r,n}}{n+\theta}.
\end{displaymath}
Finally, Equations \eqref{DEFK} and \eqref{DEFKr} follow from the sequential construction of the Ewens-Pitman model.

\subsection{Moment formulae for the random variable $(K_{n}, K_{r,n})$}
Assume that $\alpha\in[0,1[$ and  $\alpha+\theta>0$. We have from \eqref{epsm} and 
the probability distribution of $K_{n}$ given in \citep[Equation 3.11]{Pit(06)} that
\begin{equation}\label{eq:condep}
\dP(\mathbf{K}_{n}=(k_{1},\ldots,k_{n})\,|\,K_{n}=k)=\frac{n!}{\mathscr{C}(n,k;\alpha)}
\prod_{i=1}^{n}\left(\frac{\alpha(1-\alpha)^{(i-1)}}{i!}\right)^{k_{i}}\frac{1}{k_{i}!},
\end{equation}
where $\mathscr{C}(n,k;\alpha)$ is the generalized factorial coefficient \citep[Chapter 2]{Cha(05)} defined, for all $n \geq 0$ and for all $k \geq 0$, by
\begin{displaymath}
\mathscr{C}(n,k;\alpha)=\frac{1}{k!}\sum_{i=0}^{k}{k\choose i}(-1)^{i}(-\alpha i)^{(n)}.
\end{displaymath}
More precisely, Equation \eqref{eq:condep} is a conditional version of the Ewens-Pitman model, which is defined on the set
\begin{equation}\label{part_set}
\mathcal{K}_{n,k}=\left\{(k_{1},\ldots,k_{n})\text{ : }k_{i}\geq0,\,\sum_{i=1}^{n}k_{i}=k\text{ and }\sum_{i=1}^{n}ik_{i}=n\right\}.
\end{equation}
For any $r=1,\ldots,n$, we are going to compute the conditional falling factorial moments of $K_{r,n}$ given $K_n$, that is for all integer $p\geq 1$, $\E[(K_{r,n})_{(p)}\,|\,K_{n}=k]$ where, for any $a \in \dR$, 
$(a)_{(p)}=a(a-1)\cdots(a-p+1)$ 
with $(a)_{(0)}=1$. We obtain from \eqref{Sibuya} and \eqref{eq:condep} that
\begin{align}\label{cond_mom1}
\E[(K_{r,n})_{(p)}\,|\,K_{n}=k]
&=\frac{n!}{\mathscr{C}(n,k;\alpha)}\sum_{(k_{1},\ldots,k_{n})\in\mathcal{K}_{n,k}}(k_{r})_{(p)}\prod_{i=1}^{n}\left(p_\alpha(i)\right)^{k_{i}}\frac{1}{k_{i}!} \notag\\
&=\frac{n!}{\mathscr{C}(n,k;\alpha)} \!\! \sum_{(k_{1},\ldots,k_{n})\in\mathcal{K}_{n,k}}
\!\!\!\! \left(p_\alpha(r)\right)^{k_{r}}\frac{1}{(k_{r}-p)!}\prod_{1\leq i\neq r\leq n }\left(p_\alpha(i)\right)^{k_{i}}\frac{1}{k_{i}!} \notag\\
&=\frac{n!}{\mathscr{C}(n,k;\alpha)}\left(p_\alpha(r) \right)^{p}
\!\! \sum_{(k_{1},\ldots,k_{n-pr})\in\mathcal{K}_{n-pr,k-p}} \!\!\prod_{i=1}^{n-pr}\left(p_\alpha(i)\right)^{k_{i}}\frac{1}{k_{i}!} \notag\\
&=\frac{1}{\mathscr{C}(n,k;\alpha)}\left(p_\alpha(r) \right)^{p} (n)_{(pr)} \mathscr{C}(n-pr,k-p;\alpha),
\end{align}
thanks to \citet[Theorem 2.15 ]{Cha(05)}. Moreover, we have for all integer $p\geq 1$,
\begin{equation}
\label{stirling2}
\E[K_{r,n}^p \,|\,K_{n}=k]=\sum_{i=0}^p \left\{ 
\begin{matrix}
p \\i 
\end{matrix}
\right\}
\E[(K_{r,n})_{(i)}\,|\,K_{n}=k],
\end{equation}
where the curly brackets are the Stirling numbers of the second kind given by
$$
\left\{ 
\begin{matrix}
p \\i
\end{matrix}
\right\}= \frac{1}{i!}\sum_{j=0}^i (-1)^{i-j}  {i\choose j} j^p.
$$
Therefore, we deduce \eqref{cond_mom1} and \eqref{stirling2} that
\begin{equation}\label{cond_mom3}
\E[K_{r,n}^{p}\,|\,K_{n}=k]=\frac{1}{\mathscr{C}(n,k;\alpha)}\sum_{i=0}^p \left\{ 
\begin{matrix}
p \\i 
\end{matrix}
\right\}
\left(p_\alpha(r)\right)^{i}(n)_{(ir)}\mathscr{C}(n-ir,k-i;\alpha).
\end{equation}
Hereafter, we are able to compute the joint moments $\E[K_{r,n}^{p}K_{n}^{q}]$, for any $p\in\mathbb{N}$ and $q\in\mathbb{Z}$. More precisely, it follows from \eqref{cond_mom3} and the tower property of conditional expectation that
\begin{align*}
\E[K_{r,n}^{p}K_{n}^{q}] &= \sum_{k=1}^{n} \E[K_{r,n}^{p} K_{n}^{q}\,|\,K_{n}=k] \dP(K_n=k) \\
&=\sum_{k=1}^{n}k^{q}\frac{\left(\frac{\theta}{\alpha}\right)^{(k)}}{(\theta)^{(n)}}\mathscr{C}(n,k;\alpha)\E[K_{r,n}^{p}\,|\,K_{n}=k]\\
&=\sum_{k=1}^{n}k^{q}\frac{\left(\frac{\theta}{\alpha}\right)^{(k)}}{(\theta)^{(n)}}
\sum_{i=0}^p \left\{ 
\begin{matrix}
p \\i 
\end{matrix}
\right\}
\left(p_\alpha(r)\right)^{i}(n)_{(ir)}\mathscr{C}(n-ir,k-i;\alpha) \\
&=\sum_{i=0}^p \left\{ 
\begin{matrix}
p \\i 
\end{matrix}
\right\}
\left(p_\alpha(r)\right)^{i}(n)_{(ir)}
\sum_{k=i}^{n}k^{q}\frac{\left(\frac{\theta}{\alpha}\right)^{(k)}}{(\theta)^{(n)}}\mathscr{C}(n-ir,k-i;\alpha)\\
&=\sum_{i=0}^p \left\{ 
\begin{matrix}
p \\i 
\end{matrix}
\right\}
\left(p_\alpha(r)\right)^{i}(n)_{(ir)}
\sum_{\ell=0}^{n-i}(\ell+i)^{q}\frac{\left(\frac{\theta}{\alpha}\right)^{(\ell+i)}}{(\theta)^{(n)}}
\mathscr{C}(n-ir,\ell;\alpha)\\
&=\sum_{i=0}^p \left\{ 
\begin{matrix}
p \\i 
\end{matrix}
\right\}
\left(p_\alpha(r)\right)^{i}(n)_{(ir)}
\left(\frac{\theta}{\alpha}\right)^{(i)}
\frac{\left(\theta+i \alpha \right)^{(n-ir)}}{(\theta)^{(n)}}
\mathscr{E}(n-ir,q)
\end{align*}
where
\begin{equation*}
\mathscr{E}(n-ir,q)=\sum_{\ell=1}^{n-ir}(\ell+i)^{q}\frac{\left(\frac{\theta}{\alpha}+i\right)^{(\ell)}}{(\theta+i \alpha)^{(n-ir)}}
\mathscr{C}(n-ir,\ell;\alpha).
\end{equation*}
One can observe that
\begin{displaymath}
\mathscr{E}(n-ir,q)=\E[(K^{\ast}_{n-ir}+i)^{q}]
\end{displaymath}
where $K^{\ast}_{n-ir}$ is for the number of blocks of the random partition of $[n-ir]=\{1,\ldots,n-ir\}$ distributed according to the Ewens-Pitman model with parameter $\alpha\in[0,1[$ and $\theta+i\alpha>-\alpha$.
Finally, we can conclude that
\begin{equation}\label{eq_finalmom}
\E[K_{r,n}^{p}K_{n}^{q}]
= \sum_{i=0}^p \left\{ 
\begin{matrix}
p \\i 
\end{matrix}
\right\}
\left(p_\alpha(r)\right)^{i}(n)_{(ir)}
\left(\frac{\theta}{\alpha}\right)^{(i)}
\frac{\left(\theta+i \alpha \right)^{(n-ir)}}{(\theta)^{(n)}}
\E[(K^{\ast}_{n-ir}+i)^{q}].
\end{equation}

\subsection{Negative moments of $K_{n}$}

Our last lemma, which may have its own interest, provides a uniform bound on the negative moments of $K_{n}$.

\begin{lem}\label{lem:unif-neg-moment}
Let $\alpha\in]0,1[$ and assume that $\theta>0$. Then, for any $q\in]0,1\!+\!\theta/\alpha[$, we have
\begin{equation}
\label{NEGMKN}
\sup_{n\ge1}\E\left[\left(\frac{K_n}{n^\alpha}\right)^{-q}\right]<+\infty .
\end{equation}
\end{lem}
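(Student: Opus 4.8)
The plan is to establish the equivalent statement $\sup_{n}\E[(K_n/n^\alpha)^{-q}]<+\infty$, by isolating the atypical event that $K_n$ is much smaller than its typical order $n^\alpha$. Fix $\delta\in\,]0,1[$. On $\{K_n\ge \delta n^\alpha\}$ the monotonicity of $x\mapsto x^{-q}$ gives $(K_n/n^\alpha)^{-q}\le \delta^{-q}$, so this event contributes at most $\delta^{-q}$ uniformly in $n$. Everything therefore reduces to a uniform bound on
\begin{equation*}
\E\bigl[(K_n/n^\alpha)^{-q}\ind_{\{K_n<\delta n^\alpha\}}\bigr]=n^{\alpha q}\sum_{k=1}^{\lfloor \delta n^\alpha\rfloor}k^{-q}\,\dP(K_n=k),
\end{equation*}
that is, to a quantitative control of the left tail of the law of $K_n$.

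For this I would use the explicit distribution $\dP(K_n=k)=\frac{(\theta/\alpha)^{(k)}}{(\theta)^{(n)}}\,\mathscr{C}(n,k;\alpha)$ recalled in the appendix, together with the generating identity $\sum_{n}\mathscr{C}(n,k;\alpha)\,t^n/n!=(1-(1-t)^\alpha)^k/k!$. Since $1-(1-t)^\alpha=\sum_{j\ge1}p_\alpha(j)t^j$ is precisely the probability generating function of the Sibuya law, this rewrites as $\mathscr{C}(n,k;\alpha)=\frac{n!}{k!}\,\dP(S_k=n)$, where $S_k$ is the sum of $k$ independent Sibuya$(\alpha)$ variables. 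The key estimate I would prove is the uniform upper bound
\begin{equation*}
\dP(K_n=k)\le C\,k^{\theta/\alpha}\,n^{-\alpha-\theta},\qquad 1\le k\le \delta n^\alpha,
\end{equation*}
for a constant $C=C(\alpha,\theta,\delta)$. Granting it, and using $(\theta/\alpha)^{(k)}/k!\sim k^{\theta/\alpha-1}/\Gamma(\theta/\alpha)$ and $n!/(\theta)^{(n)}\sim\Gamma(\theta)\,n^{1-\theta}$, the displayed sum is at most $C\,n^{\alpha q-\alpha-\theta}\sum_{k\le\delta n^\alpha}k^{\theta/\alpha-q}$. The hypothesis $q<1+\theta/\alpha$ is exactly the condition $\theta/\alpha-q>-1$ ensuring $\sum_{k\le N}k^{\theta/\alpha-q}\asymp N^{\theta/\alpha-q+1}$, and with $N=\delta n^\alpha$ the powers of $n$ cancel, yielding a bound independent of $n$. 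The endpoint is dictated by the single term $k=1$: since $\dP(K_n=1)=\frac{\Gamma(1+\theta)}{\Gamma(1-\alpha)}\frac{\Gamma(n-\alpha)}{\Gamma(n+\theta)}\asymp n^{-\alpha-\theta}$, its contribution is $\asymp n^{\alpha(q-1)-\theta}$, which stays bounded precisely when $q\le1+\theta/\alpha$ and diverges logarithmically at the endpoint; this pins down the restriction $q<1+\theta/\alpha$.

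The main obstacle is the uniform pmf bound above, which must hold with the sharp exponent $\theta/\alpha$ in $k$ over the whole growing range $1\le k\le\delta n^\alpha$; a cruder exponent would either be false for the larger values of $k$ or fail to close the summation near the endpoint. Through $\mathscr{C}(n,k;\alpha)=\frac{n!}{k!}\dP(S_k=n)$ it is equivalent to the local heavy-tailed estimate $\dP(S_k=n)\le C\,k\,n^{-1-\alpha}$ for $n\ge \delta^{-1/\alpha}k^{1/\alpha}$. As $p_\alpha(r)\asymp r^{-1-\alpha}$, the summands are regularly varying of index $\alpha$, and on this range the event $\{S_k=n\}$ is created by a single big jump, producing the rate $k\,n^{-1-\alpha}$ (which at the typical scale $n\asymp k^{1/\alpha}$ matches the $\alpha$-stable local limit theorem). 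I would obtain it from a local Nagaev-type inequality for sums of regularly varying integer variables, or equivalently from the uniform asymptotics of the generalized factorial coefficients available in \citep{Cha(05)}.

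A more analytic, self-contained variant replaces the pointwise pmf bound by a uniform bound on the Laplace transform. Writing $\E[K_n^{-q}]=\frac{1}{\Gamma(q)}\int_0^\infty t^{q-1}\E[e^{-tK_n}]\,dt$ and substituting $t=u/n^\alpha$, one reduces to bounding $\psi_n(u):=\E[e^{-uK_n/n^\alpha}]$ by $C(1+u)^{-(1+\theta/\alpha)}$ uniformly in $n$, after which $\int_0^\infty u^{q-1}\psi_n(u)\,du$ is finite uniformly exactly when $q<1+\theta/\alpha$. Here $s=e^{-u/n^\alpha}$, and the exact generating function $\E[s^{K_n}]=\frac{n!}{(\theta)^{(n)}}[t^n]\bigl(1-s+s(1-t)^\alpha\bigr)^{-\theta/\alpha}$ shows, by a singularity analysis at $t=1$ where the singular part is $(1-t)^\alpha$, that $\psi_n(u)\asymp u^{-(1+\theta/\alpha)}$. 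This transfer is uniform as long as $s$ stays in a compact subset of $[0,1[$, i.e. $u\gtrsim n^\alpha$; the delicate range is $u\lesssim n^\alpha$, where $s\to1$ and one must make the coefficient asymptotics uniform in $s$. Either way the crux is the same uniform tail control, and the threshold $1+\theta/\alpha$ reflects the behavior of the density of $S_{\alpha,\theta}$ near the origin.
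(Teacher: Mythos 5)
Your proposal is correct and follows essentially the same route as the paper: split the expectation at $K_n\asymp n^\alpha$, write the left tail via $\dP(K_n=k)=\frac{n!}{k!}\frac{(\theta/\alpha)^{(k)}}{(\theta)^{(n)}}\dP(S_k=n)$ with $S_k$ a sum of $k$ i.i.d.\ Sibuya variables, establish the single-big-jump local estimate $\dP(S_k=n)\le C\,k\,n^{-1-\alpha}$ on the range $k\lesssim n^\alpha$ to get $\dP(K_n=k)\le C k^{\theta/\alpha}n^{-\alpha-\theta}$, and use $q<1+\theta/\alpha$ exactly as the condition $\theta/\alpha-q>-1$ making $\sum_k k^{\theta/\alpha-q}$ cancel the powers of $n$. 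The only divergence is that the paper proves the local estimate self-containedly from the alternating-sum formula for $\dP(S_k=n)$ in terms of rising factorials, via Euler's reflection formula, Wendel's inequality, and the super-exponential decay of $\Gamma(i\alpha+1)/i!$, whereas you defer it to a local Nagaev-type inequality or to uniform asymptotics of generalized factorial coefficients; the bound you would need is precisely the one the paper derives, so this is a difference of presentation rather than a gap.
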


\begin{proof}
For any $q\in]0,1\!+\!\theta/\alpha[$, we have according to \citet[Formula (3.45)]{Pit(06)} that
\begin{equation}
\label{PKSIB}
\E[K_n^{-q}]
= \sum_{k=1}^n \frac{1}{k^{q}}\, \dP(K_n=k)=\sum_{k=1}^n \frac{1}{k^{q}}
\frac{n!}{k!}\,\frac{\left(\frac{\theta}{\alpha}\right)^{(k)}}{(\theta)^{(n)}}\,
\dP(S_k=n)
\end{equation}
where $S_k=X_1+\cdots+X_k$, with the random variables $X_{i}$'s being independent and identically distributed sharing the same Sibuya distribution $p_{\alpha}$. Moreover, we also have from \citet[Formula (3.44)]{Pit(06)} that for all $n \geq 1$ and for all $1\leq k \leq n$,
\begin{align} 
\dP(S_k=n)&=\frac{1}{n!}\sum_{i=1}^k (-1)^{i}\binom{k}{i}\,(-i \alpha)^{(n)}=\frac{1}{n!}\sum_{i=1}^k (-1)^{n+i}\binom{k}{i}\,(i\alpha )_{(n)}  \notag \\
 &=\frac{1}{n!} \sum_{i=1, i \alpha \notin\dN}^k (-1)^{n+i}\binom{k}{i}\,\frac{\Gamma(i\alpha +1)}{\Gamma(i\alpha+1-n) }. 
\label{PKSIB1}
\end{align}
However, we know from Euler's reflection formula that for all $i \geq 1$ such that 
$i \alpha \notin\dN$, 
\begin{equation*}
\Gamma(i\alpha+1-n)\Gamma(n-i\alpha)=\frac{\pi}{\sin(\pi(n-i\alpha))}.
\end{equation*}
Hence, we obtain from \eqref{PKSIB1} that
\begin{equation} 
\dP(S_k=n)= \sum_{i=1, i \alpha \notin\dN}^k (-1)^{i+1}\binom{k}{i}\,\frac{\sin(i \alpha \pi)\Gamma(i\alpha +1)}{\pi} \frac{\Gamma(n-i\alpha)}{\Gamma(n+1)}. 
\label{PKSIB2}
\end{equation}
Furthermore, it follows from Wendel’s inequality for the ratio of gamma functions that there exists a constant $C_\alpha >0$ such that
for all $n \geq 1$,  
$$
\frac{\Gamma(n-i\alpha)}{\Gamma(n+1)} \leq  \frac{C_\alpha}{n^{1+i \alpha}}.
$$
Consequently, \eqref{PKSIB2} immediately leads to 
\begin{align} 
\dP(S_k=n)&\leq \frac{C_\alpha}{\pi}\sum_{i=1}^k \binom{k}{i}\,\frac{\Gamma(i\alpha +1)}{n^{1+i \alpha} } \leq \frac{C_\alpha}{\pi}\sum_{i=1}^k \frac{k^i}{i!}\,\frac{\Gamma(i\alpha +1)}{n^{1+i \alpha} } \notag \\
&\leq \frac{C_\alpha}{n \pi}\sum_{i=1}^k \frac{\Gamma(i\alpha +1)}{i!}\,\left(\frac{k}{n^{\alpha} }\right)^i.
\label{PKSIB3}
\end{align}
In addition, since $\alpha\in]0,1[$, $\Gamma(i\alpha+1)/i!$ decays super-exponentially in $i$. In particular, there exists a constant $D_\alpha>0$ such that for all $i \geq 1$,
\begin{equation}
\label{PKSIB4}
\frac{\Gamma(i\alpha +1)}{i!}\le \frac{D_\alpha}{i^{(1-\alpha)i}}.
\end{equation}
Consequently, for all $1 \leq k \leq n$ such that $k\le n^\alpha$, we have $(k/n^\alpha)^i\le 1$ and we obtain from \eqref{PKSIB3} and \eqref{PKSIB4}
that 
\begin{equation}
\label{PKSIB5}
\dP(S_k=n)\le \frac{C_\alpha D_\alpha}{n \pi}\sum_{i=1}^\infty \frac{1}{{i^{(1-\alpha)i}}}\left(\frac{k}{n^\alpha}\right)^i\le 
 \frac{E_\alpha\,k}{n^{1+\alpha}},
\end{equation}
for some constant $E_\alpha>0$, by virtue of the elementary fact that
$$
\sum_{i=1}^\infty \frac{1}{{i^{(1-\alpha)i}}}< \infty.
$$
Hereafter, we focus our attention on the prefactor into \eqref{PKSIB}.
We obtain once again from Wendel’s inequality that there exist two constants $C_\theta >0$ and $D_{\alpha, \theta} >0$ such that
for all $n \geq 1$ and for all $k \geq 1$,
\begin{displaymath}
\frac{n!}{(\theta)^{(n)}}=\frac{\Gamma(n+1)\Gamma(\theta)}{\Gamma(n+\theta)}
\le C_\theta\,n^{1-\theta}
\end{displaymath}
and
\begin{displaymath}
\frac{\big(\frac{\theta}{\alpha}\big)^{(k)}}{k!}
=\frac{\Gamma(\frac{\theta}{\alpha}+k)}{\Gamma(\frac{\theta}{\alpha})\Gamma(k+1)}
\le D_{\alpha, \theta}\,k^{\theta/\alpha-1}.
\end{displaymath}
Therefore, it follows from \eqref{PKSIB} and \eqref{PKSIB5} that there exists a constant $E_{\alpha, \theta} >0$ such
for all $1\leq k \leq n$ where $k\le n^\alpha$,
\begin{equation}
\label{PKSIB6}
\dP(K_n=k)\le \frac{E_{\alpha, \theta}\,   k^{\theta/\alpha}}{n^{\alpha+\theta}}.
\end{equation}
Consequently, by splitting \eqref{PKSIB} into two terms, we obtain from \eqref{PKSIB6} with $\kappa_n=\lfloor n^\alpha\rfloor$ that
\begin{align}
\E\left[\left(\frac{K_n}{n^\alpha}\right)^{-q}\right]
&= n^{\alpha q} \sum_{k=1}^{\kappa_n} \frac{1}{k^{q}}\, \dP(K_n=k)
+ n^{\alpha q} \sum_{k=\kappa_n +1}^{n} \frac{1}{k^{q}}\, \dP(K_n=k), \notag \\
& \leq n^{\alpha q} \sum_{k=1}^{\kappa_n} \frac{1}{k^{q}}\, \frac{E_{\alpha, \theta}\,   k^{\theta/\alpha}}{n^{\alpha+\theta}}
+ n^{\alpha q}n^{-\alpha q} \sum_{k=\kappa_n +1}^{n}  \dP(K_n=k),
 \notag \\
&\leq 1+ \frac{E_{\alpha, \theta}}{n^{\alpha+\theta - \alpha q}} \sum_{k=1}^{\kappa_n}  k^{\theta/\alpha -q}\,.
\label{PKSIB7}
\end{align}
Since $q<1+\theta/\alpha$, we have $q_{\alpha,\theta}=\theta/\alpha+1-q>0$, which implies that
\begin{displaymath}
\sum_{k=1}^{\kappa_n}  k^{\theta/\alpha -q}\le \frac{1}{q_{\alpha,\theta}}\kappa_n^{q_{\alpha,\theta}}
\leq \frac{1}{q_{\alpha,\theta}}\,n^{\theta+\alpha-\alpha q}.
\end{displaymath}
Finally, by plugging this inequality into \eqref{PKSIB7}, we immediately obtain \eqref{NEGMKN}, which completes the proof of Lemma 
\ref{lem:unif-neg-moment}.
\end{proof}


\section*{Acknowledgement}

Stefano Favaro is also affiliated to IMATI-CNR ``Enrico  Magenes" (Milan, Italy). Stefano Favaro acknowledge the financial support from the Italian Ministry of Education, University and Research (MIUR), ``Dipartimenti di Eccellenza" grant 2023-2027. 


\end{document}